\RequirePackage{etoolbox}
\patchcmd{\bibliographystyle}{#1}{abbrvnat}{}{}

\documentclass[twoside,11pt,pdftex]{article}

%

\usepackage{jmlr2e}
\usepackage{amsmath} 
\usepackage{multirow}
\synctex=1

\usepackage{color}
\usepackage[T1]{fontenc}
\usepackage[latin1]{inputenc}
\usepackage[pdftex,                %
bookmarks         = true,
bookmarksnumbered = true,
pdfpagemode       = None,
pdfstartview      = FitH,
pdfpagelayout     = SinglePage,
colorlinks        = FALSE,
urlcolor          = magenta,
pdfborder         = {0 0 0}
]{hyperref}




\title{Asymptotic Results on Adaptive False Discovery Rate Controlling Procedures Based on Kernel Estimators}
\ShortHeadings{Asymptotics of Kernel-Based Adaptive FDR Controlling Procedures 
}{P. Neuvial}
\firstpageno{1}

\author{
  \name Pierre Neuvial
  \email pierre.neuvial@genopole.cnrs.fr \\
  \addr Laboratoire de Probabilités et Modèles Aléatoires\\ 
  Université Paris VII-Denis Diderot\\ \medskip
  175 rue du Chevaleret, 75013 Paris, France \\ 
  INSERM, U900, Paris, F-75248 France\\
  Ecole des Mines de Paris, ParisTech, Fontainebleau, F-77300 France\\ \medskip
  Institut Curie, 26 rue d'Ulm, Paris cedex 05, F-75248 France\\   
  Current affiliation: Laboratoire Statistique et G\'enome\\
  Universit\'e d'\'Evry Val d'Essonne, UMR CNRS 8071 -- USC INRA, France
}
\editor{}

\graphicspath{{img/} {./}}

\newcommand{\expo}[1]{\exp{\left(#1\right)}}  

\DeclareMathOperator{\sgn}{sgn}
\newcommand{\e}[1]{e^{#1}}                    
\newcommand{\Prob}{\mathbb{P}}                   
\DeclareMathOperator{\Var}{Var}
\newcommand{\Exp}[1]{\mathbb{E}\left[#1\right]}       
\newcommand{\ind}[1]{\mathbf{1}_{#1}}         

\newcommand{\po}[1]{\mathrm{o}\left(#1\right)}   
\newcommand{\pop}[1]{\mathrm{o_P}\left(#1\right)}   




\newcommand{\hn}{\mathcal{H}_0}             
\newcommand{\ha}{\mathcal{H}_1}             

\newcommand{\eps}{\varepsilon}              

\newcommand{\tcdf}{F}                       
\newcommand{\tpdf}{f}                       

\newcommand{\tcdfn}{\tcdf_0}                
\newcommand{\tpdfn}{\tpdf_0}                
\newcommand{\tcdfa}{\tcdf_1}                
\newcommand{\tpdfa}{\tpdf_1}                

\newcommand{\pcdf}{G}                       
\newcommand{\ppdf}{g}                       

\newcommand{\pcdfn}{\pcdf_0}                
\newcommand{\pcdfa}{\pcdf_1}                
\newcommand{\ppdfa}{\ppdf_1}                

\newcommand{\ppdfats}{\ppdfa}                
\newcommand{\ppdfaos}{\ppdfa}                
\newcommand{\ppdfts}{\ppdf}                
\newcommand{\ppdfos}{\ppdf}                
\newcommand{\pcdfos}{\pcdf}                
\newcommand{\pcdfaos}{\pcdfa}                



\newcommand{\empcdf}[2]{\widehat{\mathbb{#1}}_{#2}}  
\newcommand{\rempcdf}[2]{\bar{\mathbb{#1}}_{#2}}  

\newcommand{\ecdf}[1]{\empcdf{#1}{m}}  
\newcommand{\rcdf}[1]{\rempcdf{#1}{m}}  

\newcommand{\pecdf}{\ecdf{\pcdf}}           
\newcommand{\prcdf}{\rcdf{\pcdf}}           

\newcommand{\pecdfa}{\empcdf{\pcdf}{1,m}}     
\newcommand{\pecdfn}{\empcdf{\pcdf}{0,m}}     

\newcommand{\pecdfxU}[1]{\widehat{\Gamma}_{#1,m}}     
\newcommand{\pecdfaU}{\pecdfxU{1}}     
\newcommand{\pecdfnU}{\pecdfxU{0}}     

\newcommand{\lr}{\frac{\tpdfa}{\tpdfn}}
\newcommand{\ilr}{\tpdfa/\tpdfn} 
\newcommand{\lrg}{\frac{\tpdfa^\gamma}{\tpdfn^\gamma}}
\newcommand{\ilrg}{\tpdfa^\gamma/\tpdfn^\gamma}

\newcommand{\as}{\alpha^\star}
\newcommand{\asPI}{\alpha^\star_0}
\newcommand{\bs}{\underline{\alpha}^\star}

\DeclareMathOperator{\FDR}{FDR}
\newcommand{\fdr}{\ensuremath{\FDR}}

\DeclareMathOperator{\FDP}{FDP}
\newcommand{\fdp}{\ensuremath{\FDP}}

\newcommand{\pim}{\ensuremath{\pi_{0}}}
\newcommand{\pih}{\hat{\pi}_{0,m}}
\newcommand{\pihSto}{\pih^\mathrm{Sto}}
\newcommand{\pihl}{\pihSto(\lambda)}

\newcommand{\pibSto}{\overline{\pi_0}\left(\lambda\right)}


\newcommand{\tBH}{\widehat{\tau}_m}

\newcommand{\tBHinf}{\tau_\infty}

\newcommand{\tSto}{\widehat{\tau}^{0,\lambda}_m}

\newcommand{\tStoinf}{\tau^{0,\lambda}_\infty}

\newcommand{\tPI}{\widehat{\tau}_m^0}
\newcommand{\rPI}{\widehat{\rho}_m^0}
\newcommand{\vPI}{\widehat{\nu}_m^0}

\newcommand{\fdpPI}{\widehat{\FDP}_m^0}

\newcommand{\tPIinf}{\tau_\infty^0}
\newcommand{\rPIinf}{\rho_\infty^0}
\newcommand{\vPIinf}{\nu_\infty^0}









\newenvironment{todo}{\color{red} [\bf TODO: \itshape }{]}

\newcommand{\Hh}{H\!h}



\begin{document}

\maketitle

  
\abstract{
  The False Discovery Rate (FDR) is a commonly used type I error rate in multiple testing problems.  It is defined as the expected False Discovery Proportion (FDP), that is, the expected fraction of false positives among rejected hypotheses.  When the hypotheses are independent, the Benjamini-Hochberg procedure achieves FDR control at any pre-specified level.  By construction, FDR control offers no guarantee in terms of power, or type II error.  A number of alternative procedures have been developed, including plug-in procedures that aim at gaining power by incorporating an estimate of the proportion of true null hypotheses. 

In this paper, we study the asymptotic behavior of a class of plug-in procedures based on kernel estimators of the density of the $p$-values, as the number $m$ of tested hypotheses grows to infinity.  In a setting where the hypotheses tested are independent, we prove that these procedures are asymptotically more powerful in two respects: (i) a tighter asymptotic FDR control for any target FDR level and (ii) a broader range of target levels yielding positive asymptotic power.  We also show that this increased asymptotic power comes at the price of slower, non-parametric convergence rates for the FDP.  These rates are of the form $m^{-k/(2k+1)}$, where $k$ is determined by the regularity of the density of the $p$-value distribution, or, equivalently, of the test statistics distribution.  These results are applied to one- and two-sided tests statistics for Gaussian and Laplace location models, and for the Student model.
}

\begin{keywords} Multiple testing, False
  Discovery Rate, Benjamini Hochberg's procedure, power, criticality,
  plug-in procedures, adaptive control, test statistics distribution,
  convergence rates, kernel estimators. \end{keywords}


\section{Introduction}
\label{sec:intro}


Multiple simultaneous hypothesis testing has become a major issue for high-dimensional data analysis in a variety of fields, including non-parametric estimation by wavelet methods in image analysis, functional magnetic resonance imaging (fMRI) in medicine, source detection in astronomy, and DNA microarray or high-throughput sequencing analyses in genomics.  Given a set of observations corresponding either to a null hypothesis or an alternative hypothesis, the goal of multiple testing is to infer which of them correspond to true alternatives.  This requires the definition of risk measures that are adapted to the large number of tests performed: typically  $10^4$ to $10^6$ in genomics.  
The False Discovery Rate (\fdr) introduced by~\cite{benjamini95controlling} is one of the most commonly used and one of the most widely studied such risk measure in large-scale multiple testing problems. The \fdr\ is defined as the expected proportion of false positives among rejected hypotheses.  A simple procedure called the Benjamini-Hochberg (BH) procedure provides \fdr\ control when the tested hypotheses are independent~\citep{benjamini95controlling} or follow specific types of positive dependence~\citep{benjamini01the-control}.  

When the hypotheses tested are independent, applying the BH procedure at level $\alpha$ in fact yields FDR $=\pi_0\alpha$, where $\pi_0$ is the unknown fraction of true null hypotheses~\citep{benjamini01the-control}.  This has motivated the development of a number of ``plug-in'' procedures, which consist in applying the BH procedure at level $\alpha/\hat{\pi}_0$, where $\hat{\pi}_0$ is an estimator of $\pi_0$.  A typical example is the Storey-$\lambda$ procedure~\citep{storey02daf,storey04strong} in which $\hat{\pi}_0$  is a function of the empirical cumulative distribution function of the $p$-values.

  In this paper, we consider an asymptotic framework where the number
  $m$ of tests performed goes to infinity. When $\hat{\pi}_0$
  converges in probability to $\pi_{0,\infty} \in [\pi_0, 1)$ as $m \to
  +\infty$, the corresponding plug-in procedure is by construction
  asymptotically more powerful than the BH procedure, while still
  providing FDR $\leq \alpha$. However, as FDR control only implies
  that the \emph{expected} FDP is below the target level, it is of
  interest to study the \emph{fluctuations} of the FDP achieved by
  such plug-in procedures around their corresponding FDR.
  This paper studies the influence of the plug-in step on the
  asymptotic properties of the corresponding procedure for a particular
  class of estimators of $\pi_0$, which may be written as kernel
  estimators of the density of the $p$-value distribution at 1.

\section{Background and notation}
\label{sec:background-notation}

\subsection{Settings}
\label{sec:model}
\paragraph{Testing one hypothesis.}

We consider a test statistic $X$ distributed as $\tcdfn$ under a null hypothesis $\hn$ and as $\tcdfa$ under an alternative hypothesis $\ha$.  We assume that  for $a \in \{0,1\}$, $\tcdf_a$ is continuously differentiable, and that the corresponding density function, which we denote by $\tpdf_a$, is positive.
This testing problem may be formulated in terms of $p$-values instead of test statistics. The $p$-value function is defined as $p:x \mapsto \Prob_{\hn}\left(X \geq x\right) = 1-\tcdfn(x)$ for one-sided tests and $p:x \mapsto  \Prob_{\hn}\left(|X| \geq |x|\right)$ for two-sided tests.  As $\tcdfn$ is continuous, the $p$-values are uniform on $[0,1]$ under $\hn$.  For consistency we denote by $\pcdfn$ the corresponding distribution function, that is, the identity function on $[0,1]$. Under $\ha$, the distribution function and density of the $p$-values are  denoted by $\pcdfa$ and $\ppdfa$, respectively.  
Their expression as functions of the distribution of the test statistics are recalled in Proposition~\ref{prop:p-values} below in the case of one- and two-sided $p$-values.  For two-sided $p$-values, we assume that the distribution function of the test statistics under $\hn$ is symmetric (around 0):
\begin{align}   \label{cond:symmetry}  \tag{Sym}
  \forall x \in \mathbb{R}, \tcdfn(x) + \tcdfn(-x) = 1\,.
\end{align}
Condition~\eqref{cond:symmetry} is typically fulfilled in usual models such as Gaussian or Laplace (double exponential) models.  Under \eqref{cond:symmetry}, the two-sided $p$-value satisfies $p(x) = 2\left(1-\tcdfn(|x|)\right)$ for any $x \in \mathbb{R}$.

\begin{proposition}[One- and two-sided $p$-values]
\label{prop:p-values} For $t \in [0,1]$, let $q_0(t) = \tcdfn^{-1}(1-t)$.  The distribution function $\pcdfa$ and the and density function $\ppdfa$ of the $p$-value under $\ha$ at $t$ satisfy the following:
\begin{enumerate}
\item for a one-sided $p$-value, $\pcdfa(t)  = 1-\tcdfa\left(q_0(t)\right)$ and $\ppdfa(t) = (\ilr)\left(q_0(t)\right)$;
\item for a two-sided $p$-value, $\pcdfa(t) = 1-\tcdfa\left(q_0(t/2)\right) + \tcdfa\left(-q_0(t/2)\right)$ and 

$\ppdfa(t) = 1/2\left((\ilr)\left(q_0(t/2)\right) + (\ilr)\left(-q_0(t/2)\right) \right)$.
\end{enumerate}
\end{proposition}
The assumption that $\tpdfa$ is positive entails that $\ppdfa$ is positive as well. 
We further assume that
\begin{align}
  \label{cond:concavity} \tag{Conc} 
  \pcdfa \textnormal{ is concave.}
\end{align}
As $\ppdfa$ is a function of the likelihood ratio $\ilr$ and the non-increasing function $q_0$, \eqref{cond:concavity} may be characterized as follows:
\begin{lemma}[Concavity and likelihood ratios]
\label{lm:concavity}
\begin{enumerate}
\item For a one-sided $p$-value, \eqref{cond:concavity} holds if and only if the likelihood ratio $\ilr$ is non-decreasing.
\item For a two-sided $p$-value  under \eqref{cond:symmetry}, \eqref{cond:concavity} holds if and only if $x \mapsto (\ilr)(x) + (\ilr)(-x)$ is non-decreasing on $\mathbb{R}_+$.
\end{enumerate}
\end{lemma}

\paragraph{Multiple testing setting.}

We consider a sequence of independent tests performed as described above and indexed by the set $\mathbb{N}^*$ of positive integers.  
We assume that either all of them are one-sided tests, or all of them are two-sided tests.
This sequence of tests is characterized by a sequence $(\mathbf{H},\mathbf{p})=(H_i, p_i)_{i \in \mathbb{N}^*}$
, where for each $i\in \mathbb{N}^*$, $p_i$ is a $p$-value associated to the $i^{th}$ test, and $H_i$ is a binary indicator defined by
\begin{displaymath}
   H_i= \begin{cases}
   0 \textnormal{ if $\hn$ is true for test $i$}\\
   1\textnormal{ if $\ha$ is true for test $i$}
  \end{cases}.
\end{displaymath}

We also let $m_0(m)=\sum_{i=1}^m (1-H_i)$, and $\pi_{0,m} = m_0(m)/m$.
Following the terminology proposed by~\cite{roquain11exact}, we define the \emph{conditional setting} as the situation where  $\mathbf{H}$ is deterministic and  $\mathbf{p}$ is a sequence of independent random variables such that for $i \in \mathbb{N}^*, p_i \sim \pcdf_{H_i}$.  This is a particular case of the setting originally considered by  \cite{benjamini95controlling}, where no assumption was made on the distribution of $p_i$ when  $H_i=1$.  
In the present paper, we consider an \emph{unconditional setting} introduced by~\cite{efron01empirical}, which is also known as the ``random effects'' setting.  Specifically, $\mathbf{H}$ is a sequence of random indicators, independently and identically distributed as $\mathcal{B}(1-\pi_0)$, where $\pi_0 \in (0,1)$, and conditional on $\mathbf{H}$,  $\mathbf{p}$ follows the conditional  setting, that is, the $p$-values satisfy $p_i \vert H_i \sim \pcdf_{H_i}$.  This unconditional setting has been widely used in the multiple testing literature, see, e.g., \cite{storey03the-positive,genovese04a-stochastic,chi07on-the-performance}. In this setting, the $p$-values are independently, identically distributed as  $\pcdf = \pi_0\pcdfn + (1-\pi_0)\pcdfa$, and 
$m_0(m)$ follows the binomial distribution $Bin(m, \pi_0)$.

  \begin{remark}\label{rk:non-sparse}
    We are assuming that $\pi_0<1$, which implies that the proportion $1-\pi_{0,m}$
    of true null alternatives does not vanish as $m \to +\infty$.
    While this restriction is natural in the unconditional setting
    considered in this paper, we note that our results do not apply to the ``sparse'' situation where $\pi_{0,m} \to 1$ as $m \to +\infty$.
  \end{remark}

As $\pcdfn$ is the identity function, the multiple testing model is entirely characterized by the two parameters $\pi_0$ and $\pcdfa$ (or, equivalently, $\pi_0$ and $\pcdf$), where $\pcdfa$ is itself entirely characterized by $\tcdfn$ and $\tcdfa$, by Proposition~\ref{prop:p-values}.
 The mixture distribution $\pcdf$ is concave if and only if~\eqref{cond:concavity} holds.  More generally, we note that making a regularity assumption on $\pcdfa$ (or $\ppdfa$) is equivalent to making the same regularity assumption on $\pcdf$ (or $\ppdf$):

\begin{remark}[Differentiability assumptions]
  Throughout the paper, differentiability assumptions on the distribution of the $p$-values near 1 are expressed in terms of $\ppdf$, the (mixture) $p$-value density. As $\ppdf = \pi_0 + (1-\pi_0)\ppdfa$, we note that they could equally be written in terms of $\ppdfa$, the $p$-value density under the alternative hypothesis. 
\end{remark}

\subsection{Type I and II error rate control in multiple testing}
\label{sec:mult-test-procs}
  We define a multiple testing procedure $\mathcal{P}$  as a
  collection of functions $(\mathcal{P}_\alpha)_{\alpha \in [0,1]}$
  such that for any $\alpha \in [0,1]$, $\mathcal{P}_\alpha$ takes as
  input a vector of $m$ $p$-values, and returns a subset of $\{1,
  \dots m\}$ corresponding to the indices of hypotheses to be
  rejected.  
For a given procedure $\mathcal{P}$ and a given $\alpha \in [0,1]$, the function $\mathcal{P}_\alpha$ will be called ``Procedure $\mathcal{P}$ at (target) level $\alpha$''.
  In this paper, we focus on \emph{thresholding-based} multiple
  testing procedures, for which the rejected hypotheses are those with
  $p$-values less than a threshold.  Each possible value for the
  threshold corresponds to a trade-off between false positives (type I
  errors) and false negatives (type II errors).  
Most risk measures developed for multiple testing procedures are based on type I errors.  We focus on one such measure, the False Discovery Rate (\fdr), which is one of the most widely used error rate in multiple testing.
Denoting by $R_m$ be the total number of rejections of  $\mathcal{P}_\alpha$ among $m$ hypotheses tested, and by $V_m$ the number of false rejections, the corresponding False Discovery Proportion is defined as $\FDP_m= V_m/(R_m\vee 1)$, and the False Discovery Rate is the expected FDP, that is:
\begin{eqnarray}
\label{eq:def-fdr}
\fdr_m = \Exp{\frac{V_m}{R_m\vee 1}}. 
\end{eqnarray}
A trivial way to control the \fdr\ --- or any risk measure only based on type I errors --- is to make no rejection with high probability.  Obviously, this is not the best strategy, as it may lead to a high number of type II errors.  
The performance of multiple testing procedures may be evaluated through their power, which is a function of the number of type II errors.  Specifically, the power of a multiple testing procedure at level $\alpha$ is generally defined as the (random) proportion of correct rejections (true positives)  among true alternative hypotheses, see, e.g., \cite{chi07on-the-performance}:
\begin{eqnarray}
\label{eq:def-power}
\Pi_m = \frac{R_m-V_m}{(m-m_0(m)) \vee 1}. 
\end{eqnarray}

\begin{remark}
  All of the quantities defined in this section implicitly depend on the multiple testing procedure considered, $\mathcal{P}=(\mathcal{P}_\alpha)_{\alpha \in [0,1]}$.  However,  for simplicity, we will write $R_m,V_m,\fdr_m$, and $\Pi_m$, instead of $R_m^{\mathcal{P}_\alpha},V_m^{\mathcal{P}_\alpha},\fdr_m^{\mathcal{P}_\alpha}$, and $\Pi_m^{\mathcal{P}_\alpha}$ whenever not ambiguous.
\end{remark}

  \begin{remark}[Power of thresholding-based procedures]
    \label{rk:increased-power}
    By definition, the power of a thresholding-based procedure is a
    non-decreasing function of its threshold. Therefore, among
    thresholding-based procedures that yield FDR less than a
    prescribed level, maximizing power is equivalent to maximizing the
    threshold of the procedure. 
  \end{remark} 

\subsection{The Benjamini-Hochberg  procedure}
\label{sec:bh-procedure}
Suppose we wish to control the \fdr\ at level $\alpha$.  Let $p_{(1)}\leq \ldots \leq p_{(m)}$ be the ordered $p$-values, and denote by $H_{(i)}$ the null hypothesis corresponding to $p_{(i)}$.  Define $\widehat{I}_m(\alpha)$ as the largest index $k\geq 0$ such that $p_{(k)}\leq \alpha k/m$.  The Benjamini-Hochberg  procedure at level $\alpha$ rejects all $H_{(i)}$ such that $i \leq \widehat{I}_m(\alpha)$ (if $\widehat{I}_m(\alpha)=0$, then no rejection is made).  This procedure has been proposed by \cite{benjamini95controlling} in the context of FDR control; \cite{seeger68a-note} reported that it had previously been used by~\cite{eklund61massignifikansproblemet} in another multiple testing context.
When all true null hypotheses are independent, the BH procedure at level $\alpha$ yields \emph{strong} FDR control, that is, it entails $\fdr \leq \alpha$ regardless of the number of true null hypotheses \citep{benjamini95controlling}.  The BH procedure also controls the \fdr\ when the $p$-values satisfy specific forms of positive dependence, see \citet{benjamini01the-control}.
Figure \ref{fig:bh} illustrates the application of the BH procedure with $\alpha=0.2$ to $m=100$ simulated hypotheses, among which 20 are true alternatives. The left panel illustrates the above definition of the BH procedure.  An equivalent definition is that the procedure rejects all hypotheses with associated \emph{p}-value is less than $\tBH(\alpha)=\alpha \widehat{I}_m(\alpha)/m$.  
\begin{figure}[!h]
  \centering
  \includegraphics[width=0.49\columnwidth]{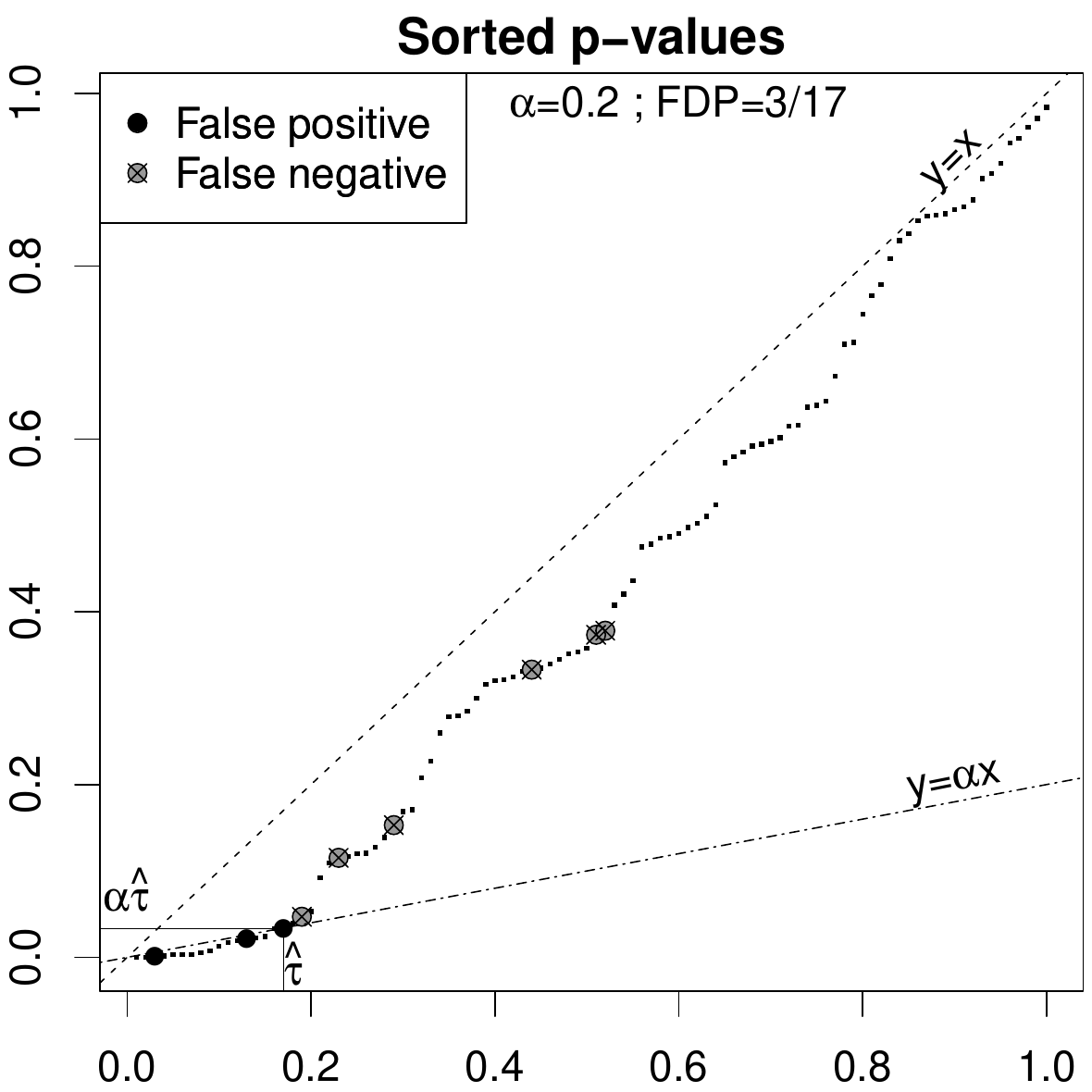}
  \caption{\emph{Illustrations of the BH procedure on a simulated example with $m=100$. Left: sorted $p$-values: $i/m \mapsto p_{(i)}$. Right: empirical distribution function: $t \mapsto \pecdf(t)$.}}
  \label{fig:bh}
\end{figure}
The right panel provides a dual representation of the same information, where the $x$ and $y$ axes have been swapped.  It gives a geometrical interpretation of $\tBH(\alpha)$ as the largest crossing point between the line $y=x/\alpha$ and the empirical distribution function of the $p$-values, defined for $t \in [0,1]$ by $\pecdf(t) =  \sum_{i=1}^m \ind{P_{i}\leq t}$:
\begin{eqnarray}\label{eq:tau-bh}
  \tBH(\alpha) = \sup \{t \in [0,1], \pecdf(t) \geq t/\alpha\}\,.
\end{eqnarray}

\subsection{Plug-in procedures}
In our setting where all of the hypotheses tested are independent, the BH procedure at target level $\alpha$ (henceforth denoted by BH$(\alpha)$ for short) in fact yields \fdr\ control at level $\pi_0 \alpha$ exactly~\citep{benjamini95controlling,benjamini01the-control}. This entails that the BH$(\alpha')$ procedure yields $\fdr \leq \alpha$ if and only if $\alpha' \leq \alpha/\pi_0$.  Therefore, as the threshold of the BH($\alpha$) procedure is a non-decreasing function of $\alpha$ and by Remark~\ref{rk:increased-power}, the BH$(\alpha/\pi_0)$ procedure is optimal in our setting, in the sense that it yields maximum power among procedures of the form  BH$(\alpha')$ that control the \fdr\ at level $\alpha$.  As $\pi_0$ is unknown, this procedure cannot be implemented; it is generally referred to as the Oracle BH procedure.  

\begin{remark}
  \label{rk:alpha-less-than-pi0}
  If $\alpha \geq \pi_0$, then rejecting all null hypotheses is
  optimal, as it corresponds to the largest possible threshold while
  still maintaining $\fdr =\pi_0 \leq \alpha$.  Therefore, we will
  assume that $\alpha < \pi_0$ throughout the paper.
\end{remark}

In order to mimic the Oracle procedure, it is natural to apply the BH procedure at level $\alpha/\pih$, where $\pih \leq 1$ is an estimator of $\pi_0$~\citep{benjamini00on-the-adaptive}. Such plug-in procedures (also known as two-stage adaptive procedures) have the same geometric interpretation as the BH procedure (see Figure~\ref{fig:bh}) in terms of the largest crossing point, with  $\alpha/\pih$ instead of $\alpha$. Their rejection threshold can be written as $\tPI(\alpha) = \tBH(\alpha/\pih)$, that is:
\begin{equation}
  \label{eq:tau-plug-in}
  \tPI(\alpha) = \sup \{t \in [0,1], {\pecdf(t)} \geq \pih t/\alpha\}\,.
\end{equation}
Note that $\tPI$ depends on the observations through both $\pecdf$ and $\pih$.  By construction, a plug-in procedure based on an estimator $\pih$ that converges in probability to $\pi_{0,\infty} \in [\pi_0, 1)$ as $m \to +\infty$ is asymptotically more powerful that the original BH procedure.

\paragraph{The Storey-$\lambda$ estimator.}
Adapting a method originally proposed by \cite{schweder82plots}, \cite{storey02daf} defined 
$\pihl=\#\{i/P_i \geq \lambda\}/\#\{i \geq \lambda\}$ for $\lambda \in (0,1)$.  This estimator may also be written as a function of the empirical distribution of the $p$-values: 
 
 \begin{equation}
   \pihl=\frac{1-\pecdf(\lambda)}{1-\lambda}\,.
   \label{eq:pi0-Storey}
\end{equation}
The rationale for $\pihl$ is that under~\eqref{cond:concavity}, larger $p$-values are more likely to correspond to true null hypotheses than smaller ones.  
Moreover, $\pihl$ converges in probability to $(1-\pcdf(\lambda))/(1-\lambda)$, where the limit is greater than $\pi_0$ as $\pcdf$ stochastically dominates the uniform distribution.  Several choices of $\lambda$ have been proposed, including $\lambda=1/2$~\citep{storey03statistical}, a data-driven choice based on the bootstrap~\cite{storey04strong}, and $\lambda=\alpha$~\citep{blanchard09adaptive}.  
In our setting, a slightly modified version of the corresponding plug-in BH$(\alpha/\pihl)$ procedure where $1/m$ is added to the numerator in \eqref{eq:pi0-Storey} achieves strong \fdr\ control at level $\alpha$~\citep{storey04strong}.
We note that the Storey-$\lambda$ estimator $\pihl$ can be viewed as a kernel estimator of the density $\ppdf$ at 1.  
\begin{definition}[Kernel of order $\ell$ and kernel estimator of a density at a point]
\label{def:kernel}
  \begin{enumerate}
  \item     A kernel of order $\ell \in \mathbb{N}$ is a function $K:\mathbb{R} \to \mathbb{R}$ such that the functions $u \mapsto u^j K(u)$ are integrable  for any $j=0 \ldots \ell$, and satisfy $\int_{\mathbb{R}}K=1$, and $\int_{\mathbb{R}}u^jK(u)du=0$ for $j=1 \dots \ell$.  
\item    The kernel estimator of a density $\ppdf$ at $x_0$ based on $m$ independent, identically distributed observations $x_1, \dots x_m$ from $\ppdf$ is defined by
    \begin{displaymath}
      \hat{\ppdf}_m(x_0) = \frac{1}{m h} \sum_{i=1}^mK\left(\frac{x_i-x_0}{h}\right)\,,
    \end{displaymath}
    where $h>0$ is called the bandwidth of the estimator, and $K$ is a kernel.
  \end{enumerate}
\end{definition}
By Definition~\ref{def:kernel}, $\pihl$ is a kernel estimator of the density $\ppdf$ at 1 with kernel $K^{\rm Sto}(t) =\ind{[-1, 0]}(t)$ and bandwidth $h=1-\lambda$. $K^{\rm Sto}$ is an asymmetric, rectangular kernel of order 0.  

\subsection{Criticality and asymptotic properties of FDR controlling procedures}
\label{sec:crit-asympt-prop}
Upper bounds on the asymptotic number of rejections of \fdr\ controlling procedures have been identified and characterized by \cite{chi07on-the-performance} and \cite{chi08positive}, who introduced the notion of  \emph{critical value of a multiple testing problem} and that of \emph{critical value of a multiple testing procedure}. 
Both notions are defined formally below.  They are tightly connected, with the important difference that the former only depends on the multiple testing problem, while the latter depends on both the multiple testing problem and a specific multiple testing procedure.

\begin{definition}[Critical value of a multiple testing problem \citep{chi07on-the-performance}]
\label{def:criticality}
The \emph{critical value} of the multiple testing problem parametrized
  by $\pi_0$ and $\pcdf$ is defined by 
\begin{eqnarray}
\label{eq:def-critic}
  \bs = \inf_{t \in (0,1]} \frac{\pi_0 t}{\pcdf(t)}\,.
\end{eqnarray}
\end{definition}
\citet[proof of Proposition 3.2]{chi08positive} proved that for any multiple testing procedure, for $\alpha < \bs$, there exists a positive constant $c(\alpha)$ such that almost surely, for $m$ large enough, the events $\{V_m/R_m\leq \alpha \}$  and $\{R_m \geq c(\alpha) \log m\}$ are incompatible.  This restriction is intrinsic to the multiple testing problem, in the sense that it holds regardless of the considered multiple testing procedure. 
Obviously, this is not a limitation when $\bs=0$.  We introduce the following Condition:
\begin{align}
  \label{cond:critic} \tag{Critic} 
  \bs>0\,.
\end{align}
Whether \eqref{cond:critic} is satisfied or not only depends on $\pcdf$.  However, the value of $\bs$ as defined in \eqref{eq:def-critic} depends on both  $\pi_0$ and $\pcdf$.   Under~\eqref{cond:concavity} we have
  $\bs = \lim_{t \to 0} \pi_0 t/\pcdf(t) =
  \pi_0/(\pi_0+(1-\pi_0)\ppdfa(0))$, where $\ppdfa(0) \in [0,+\infty]$
  is defined by $\ppdfa(0) = \lim_{t\to 0} \ppdfa(t)$.  By
  Proposition~\ref{prop:p-values}, $\ppdfa(0)$ only depends on the
  behavior of the test statistics distribution.  In particular, under~\eqref{cond:concavity}, \eqref{cond:critic} is satisfied if and only if
  the likelihood ratio $\ilr$ is bounded near $+\infty$.

We now introduce the notion of \emph{critical value of a multiple testing procedure}. \cite{chi07on-the-performance} defined the critical value of the BH procedure as $ \as_{BH} = \inf_{t \in (0,1]} t/\pcdf(t)$.  Let us denote by
\begin{equation}
  \tBHinf(\alpha) = \sup \{t \in [0,1], \pcdf(t) \geq t/\alpha \} 
  \label{eq:asymptotic-threshold-BH}
\end{equation}
 the rightmost crossing point between $\pcdf$ and the line $y=x/\alpha$.  \cite{chi07on-the-performance} has proved the following result:
\begin{proposition}[Asymptotic properties of the BH procedure~\citep{chi07on-the-performance}]
\label{prop:criticality-BH} 
For $\alpha \in [0,1]$, let $\tBH(\alpha)$ be the threshold of the BH$(\alpha)$ procedure, and let $\tBHinf(\alpha)$ be defined by \eqref{eq:asymptotic-threshold-BH}.  Let $ \as_{BH} = \inf_{t \in (0,1]} t/\pcdf(t)$.  As $m \to +\infty$,
\begin{enumerate}
\item If $\alpha< \as_{BH}$,  then $\tBH(\alpha) \overset{a.s.}{\to} 0$;
\item If  $\alpha > \as_{BH}$, then  $\tBH(\alpha) \overset{a.s.}{\to} \tBHinf(\alpha)$, where the limit is positive.
\end{enumerate}
 \end{proposition}
A straightforward consequence of Proposition~\ref{prop:criticality-BH} is that the BH($\alpha$) procedure has asymptotically null power when $\alpha < \as_{BH}$ and positive power when $\alpha > \as_{BH}$.  The following Definition generalizes the notion of critical value of to a generic multiple testing procedure: 
  \begin{definition}[Critical value of a multiple testing procedure]
    \label{def:critical-value-proc}
    Let $\mathcal{P}=(\mathcal{P}(\alpha))_{\alpha \in [0,1]}$ denote a multiple testing procedure.    The critical value of $\mathcal{P}$ is defined by
    \begin{eqnarray}
      \label{eq:def-critical-value}
      \as_\mathcal{P} = \sup \left\{ \alpha \in [0,1],  
        \Pi_{m}^{\mathcal{P}(\alpha)} \overset{a.s.}{\underset{m\to +\infty}{\longrightarrow}} 0 \right\}\,.
    \end{eqnarray}
  \end{definition}
The critical value $\as_\mathcal{P} $ depends on both the procedure $\mathcal{P}$, and the multiple setting.  
For the BH procedure, criticality ($\alpha < \as_{BH}$) corresponds to situations where the target \fdr\ level  $\alpha$ is so small that there is no positive crossing point between $\pcdf$ and the line $y=x/\alpha$.  Conversely, when $\alpha > \as_{BH}$, there is a positive crossing point between  $\pcdf$ and the line $y=x/\alpha$, as illustrated by  Figure~\ref{fig:bh} (right).
The almost sure convergence results of Proposition~\ref{prop:criticality-BH} in the case $\alpha > \as_{BH}$ were extended by \cite{neuvial08asymptotic}, in the conditional setting.  Specifically, the threshold $\tBH(\alpha)$ of the BH procedure was shown to converge in distribution to $\tBHinf(\alpha)$ at rate $m^{-1/2}$ as soon as $\alpha > \as_{BH}$.  
\cite{neuvial08asymptotic} also proved that similar central limit theorems hold for a class of thresholding-based \fdr\ controlling procedures that covers some plug-in procedures, including the Storey-$\lambda$ procedure: the threshold of a procedure $\mathcal{P}$ of this class converges in distribution to a procedure-specific, positive value at rate $m^{-1/2}$ as soon as $\alpha > \as_{\mathcal{P}}$.

\paragraph{Criticality of a multiple testing problem and criticality of a procedure.}
Whether \eqref{cond:critic} hols or not only depends on the behavior of the test statistics distribution.  However, this condition is tightly connected to the critical value of FDR controlling procedures.  In order to shed some light on this connection, we note that $\bs=\pi_0 \as_{BH}$ may be interpreted as  the critical value of the Oracle BH procedure BH$(\alpha/\pi_0)$. 
Therefore, as the Oracle BH procedure at level $\alpha$ is the most powerful procedure among thresholding-based procedures that control FDR at level $\alpha$, $\bs$ is a lower bound on the critical values of these procedures.  Specifically, multiple problems for which \eqref{cond:critic} is satisfied or  not differ in that:
\begin{itemize}
\item when~\eqref{cond:critic} is satisfied, all thresholding-based procedures that control FDR have null asymptotic power in a range of levels containing $[0, \bs)$;
\item when~\eqref{cond:critic} is not satisfied, some procedures (including BH) have positive asymptotic power for any positive level $\alpha$.
\end{itemize}

\subsection*{Organization of the paper}
\label{sec:organization-paper}
This paper extends the asymptotic results of \cite{chi07on-the-performance} and \cite{neuvial08asymptotic} to the case of plug-in procedures of the form BH$(\alpha/\pih)$, where $\pih$ is a kernel estimator of the $p$-value distribution $\ppdf$ at 1.  Specifically, we consider a class of kernel estimators of $\pi_0$, which includes a modification of the Storey-$\lambda$ estimator, where the parameter $\lambda$ tends to 1 as $m \to \infty$.  
In Section \ref{sec:conv-rates-estim}, we prove that this class of estimators of $\pi_0$ achieves non-parametric convergence rates of the form $m^{-k/(2k+1)}/\eta_m$, where $\eta_m$ goes to 0 slowly enough as $m \to +\infty$, and $k$ controls the regularity of $\ppdf$ at 1.  In Section \ref{sec:conv-rate-cons}, we characterize the critical value $\asPI$ of plug-in procedures based on such estimators, and prove that when the target FDR level $\alpha$ 
is greater than $\asPI$, the convergence rate of these plug-in procedures is $m^{-k/(2k+1)}/\eta_m$, which is slower than the parametric rate achieved by the BH procedure and by the plug-in procedures studied in \cite{neuvial08asymptotic}.  
In Section~\ref{sec:exampl-locat-models}, these results are applied to one and two-sided tests in location and Student models. Practical consequences and possible extensions of this work are discussed in Section~\ref{sec:discussion}.


\section{Asymptotic properties of non-parametric estimators of $\pi_0$}
\label{sec:conv-rates-estim}
Let $\lambda \in (0,1)$.  The expectation $\pibSto$ of the Storey-$\lambda$ estimator is given by
\begin{equation}
  \label{eq:pibl}
   \pibSto = \pi_0 + (1-\pi_0) \frac{1-\pcdfa(\lambda)}{1-\lambda}.
\end{equation}
Moreover, as a regular function of the empirical distribution of the $p$-values, $\pihl$ has the following asymptotic distribution for $\lambda \in (0,1)$~\citep{genovese04a-stochastic}:
\begin{equation}
\label{eq:asy-dist-storey}  
\sqrt{m} \left(\pihl - \pibSto \right) \rightsquigarrow \mathcal{N}\left(0, \frac{\pcdf(\lambda)(1-\pcdf(\lambda))}{(1-\lambda)^2}\right)\,.
\end{equation}

In our setting, $\ppdfa$ is positive, as noted in Section~\ref{sec:model}.  Therefore, we have $\pcdfa(\lambda)<1$ for any $\lambda \in (0,1)$, and the bias $\pibSto-\pi_0$ is  positive: the Storey-$\lambda$ estimator achieves a parametric convergence rate, but it is not a consistent estimator of $\pi_0$.
Under~\eqref{cond:concavity}, this bias decreases as $\lambda$ increases (by Equation \eqref{eq:pibl}).
In order to mimic the Oracle BH$(\alpha/\pi_0)$ procedure, it is therefore natural to choose $\lambda$ close to 1.  We consider plug-in procedures where $\pi_0$ is estimated by $\pihSto(1-h_m)$, with $h_m \to 0$ as $m \to +\infty$.   As the limit in probability of this estimator is $\ppdf(1) = \pi_0 + (1-\pi_0) \ppdfa(1)$, it is consistent if and only if the following ``purity'' condition, which has been introduced by~\cite{genovese04a-stochastic}, is met:
\begin{align}
  \label{cond:purity} \tag{Pur}
  \ppdfa(1)=0
\end{align}
We note that the Storey-$\lambda$ estimator is not a consistent estimator of $\pi_0$ even in when \eqref{cond:purity} is met.  Moreover, \eqref{cond:purity}  is entirely determined by the shape of the test statistics under the alternative hypothesis. 
The asymptotic bias and variance of $\pihSto(1-h_m)$ are characterized by  Proposition~\ref{prop:asy-var-pi0h}: 

\begin{proposition}[Asymptotic bias and variance of $\pihSto(1-h_m)$]
\label{prop:asy-var-pi0h}
Let $h_m$ be a positive sequence such that $h_m \to 0$.
\begin{enumerate}
\item If $m h_m \to +\infty$ as $m \to +\infty$, then 
\begin{displaymath}
  \sqrt{m h_m} \left(\pihSto(1-h_m) - \Exp{\pihSto(1-h_m)}\right) \rightsquigarrow \mathcal{N}(0, \ppdf(1))\,.
\end{displaymath}
\item Assume that for $k \geq 1$, $\ppdf$ is $k$ times differentiable
  at $1$, with $\ppdf^{(l)}(1) = 0$ for $1\leq l < k$. Then
  \begin{displaymath}
    \Exp{\pihSto(1-h_m)} - \ppdf(1) \underset{m \to +\infty}{=} 
    \frac{(-1)^{k}\ppdf^{(k)}(1)}{(k+1)!}h_m^k  + \po{h_m^k}\,.
  \end{displaymath}
\end{enumerate}
\end{proposition}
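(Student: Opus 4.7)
The plan is to exploit the fact that $1-\pecdf(1-h_m) = \frac{1}{m}\sum_{i=1}^m \ind{P_i > 1-h_m}$ is a scaled Binomial. Specifically, writing $p_m = 1-\pcdf(1-h_m) = \Prob(P_1 > 1-h_m)$, I would observe
\[
\pih(1-h_m) = \frac{1}{m h_m}\sum_{i=1}^m \ind{P_i > 1-h_m},
\]
so that $m h_m \pih(1-h_m)$ is $\mathrm{Bin}(m, p_m)$. This reduces both claims to an analysis of this Binomial.

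For part 1, the variance of $\pih(1-h_m)$ is exactly $p_m(1-p_m)/(m h_m^2)$, so the variance of $\sqrt{mh_m}(\pih(1-h_m) - \Exp{\pih(1-h_m)})$ equals $p_m(1-p_m)/h_m$. Since $\pcdf$ is differentiable at $1$ with $\ppdf(1) = \pib$, we have $p_m/h_m \to \pib$ and $1-p_m \to 1$, so the variance tends to $\pib$. Under the assumption $mh_m \to +\infty$, we have $mp_m \to +\infty$, so the Lindeberg-Feller (or triangular-array Lyapunov) CLT applies to the Bernoulli sum and yields the claimed normal limit after normalization by $\sqrt{mp_m(1-p_m)}$; translating into the $\sqrt{mh_m}$ normalization gives the asymptotic variance $\pib$.

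For part 2, the expectation is deterministic: $\Exp{\pih(1-h_m)} = p_m/h_m = \frac{1}{h_m}\int_{1-h_m}^1 \ppdf(u)\,du$. I would Taylor-expand $\ppdfa$ at $u=1$. The hypothesis $\ppdfa^{(l)}(1)=0$ for $1 \leq l < k$ kills all intermediate terms and leaves
\[
\ppdfa(u) = \ppdfa(1) + \frac{\ppdfa^{(k)}(1)}{k!}(u-1)^k + o\bigl((u-1)^k\bigr)
\]
as $u \to 1$, hence $\ppdf(u) = \pib + (1-\pi_0)\frac{\ppdfa^{(k)}(1)}{k!}(u-1)^k + o((u-1)^k)$. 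Integrating on $[1-h_m, 1]$ uses $\int_{1-h_m}^1 (u-1)^k\,du = \frac{(-1)^k h_m^{k+1}}{k+1}$, and dividing by $h_m$ gives
\[
\Exp{\pih(1-h_m)} - \pib = (1-\pi_0)\frac{(-1)^k \ppdfa^{(k)}(1)}{(k+1)!} h_m^k + o(h_m^k),
\]
which is the stated equivalent.

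Neither step presents a real obstacle: part 1 is a direct application of the binomial CLT after identifying the leading variance, and part 2 is a Taylor expansion combined with elementary integration. The only subtle point I would make explicit is that the differentiability hypothesis on $\ppdfa$ at $1$ is enough to carry out the expansion with a qualitative $o((u-1)^k)$ remainder and that integrating the remainder yields $o(h_m^{k+1})$ by dominated convergence (or by the definition of a pointwise Taylor remainder, bounded uniformly on $[1-h_m,1]$ for $m$ large).
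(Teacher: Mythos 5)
Your proposal is correct and follows essentially the same route as the paper: part 1 is the triangular-array Bernoulli/binomial CLT (the paper verifies the Lindeberg--Feller conditions explicitly for $Y_i^m=(\ind{P_i\geq 1-h_m}-p_m)/\sqrt{mh_m}$, which is exactly your binomial normalization), and part 2 is the same Taylor expansion at $1$, which the paper carries out on $1-\pcdfa(\lambda)$ rather than on the density $\ppdfa$ followed by integration --- an equivalent computation. Your remark on controlling the Peano remainder after integration is a point the paper glosses over, and is a welcome addition.
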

Only the bias term in Proposition~\ref{prop:asy-var-pi0h} depends on the regularity $k$ of the distribution near 1: the asymptotic bias is of order $h_m^k$, while the asymptotic variance of $\pihSto(1-h_m)$ is of order $(m h_m)^{-1}$, regardless of the regularity of the distribution.  
The bandwidth $h_m$ in Proposition~\ref{prop:asy-var-pi0h} realizes a trade-off between the asymptotic bias and variance of $\pihSto(1-h_m)$.   When the regularity of the distribution is known, a natural way to resolve this bias/variance trade-off is to calibrate $h_m$ such that the Mean Squared Error (MSE) of the corresponding estimator is asymptotically minimum.  This gives rise to an optimal choice of the bandwidth, which is characterized by the following proposition:
\begin{proposition}[Asymptotic properties of $\pihSto(1-h_m)$]
    \label{prop:opt-bw-sto}
    Assume that $\ppdf$ is $k$ times differentiable at $1$ for $k\geq
    1$, with $\ppdf^{(l)}(1)=0$ for $1 \leq l < k$.
    \begin{enumerate}
    \item If $\ppdf^{(k)}(1) \neq 0$, then the asymptotically optimal
      bandwidth for $\pihSto(1-h_m)$ in terms of MSE is of order
      $m^{-1/(2k+1)}$, and the corresponding MSE is of order
      $m^{-2k/(2k+1)}$. 
    \item Let $\eta_m$ be any sequence such that $\eta_m \to 0$ and
      $m^{k/(2k+1)}\eta_m \to +\infty$ as $m \to +\infty$.  Then,
      letting $h_m(k) =m^{-1/(2k+1)} \eta_m^2$, we have, as $m \to
      +\infty$:
      \begin{align}\label{eq:opt-rate-sto}
        m^{k/(2k+1)}\eta_m \left(\pihSto(1-h_m(k))-\ppdf(1)\right)
        \rightsquigarrow \mathcal{N}(0, \ppdf(1))\,
      \end{align}
    \end{enumerate}
  \end{proposition}
Proposition~\ref{prop:opt-bw-sto} is proved in Appendix~\ref{sec:conv-rate-kernel}. The convergence rate in \eqref{eq:opt-rate-sto} is a typical convergence rate for non-parametric estimators of a density at a point.  However,  Proposition~\ref{prop:opt-bw-sto} cannot be derived from classical results on kernel estimators (e.g. \cite{tsybakov03introduction}) as such results typically require that the order of the kernel matches the regularity $k$ of the density, whereas the kernel of Storey's estimator, $K^{\rm Sto}(t) = \ind{[-1, 0]}(t)$, is of order 0.
The results that can be obtained with kernels of order $k$ are summarized by Proposition~\ref{prop:opt-bw-k}; we refer to~\citet{tsybakov03introduction} for a proof of this result.

\begin{proposition}[$k^{\rm th}$ order kernel estimator \citep{tsybakov03introduction}]
\label{prop:opt-bw-k}
  Assume that for $k\geq 1$, $\ppdf$ is $k$ times differentiable at $1$. Let $\hat{\ppdf}_{m}^{k}(1)$ be a kernel estimator of $\ppdf(1)$ with bandwidth $h_m$, associated with a $k^{\rm th}$ order kernel. 
\begin{enumerate}
    \item The optimal bandwidth for $\hat{\ppdf}_{m}^{k}(1)$ in terms
      of MSE is of order $m^{-1/(2k+1)}$, and the
      corresponding MSE is of order $m^{-2k/(2k+1)}$;
    \item Let $\eta_m$ be any sequence such that $\eta_m \to 0$ and
      $m^{k/(2k+1)}\eta_m \to +\infty$ as $m \to +\infty$.  Then
      letting $h_m(k) = m^{-1/(2k+1)}\eta_m^2$, we have, as $m \to
      +\infty$:
      \begin{displaymath}
        m^{k/(2k+1)}\eta_m \left(\hat{\ppdf}_{m}^{k}(1)-\ppdf(1)\right) \rightsquigarrow \mathcal{N}(0,\ppdf(1))\,.
      \end{displaymath}
    \end{enumerate}
\end{proposition}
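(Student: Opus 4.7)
The plan is to decompose the kernel estimator into a deterministic bias and a centered empirical part, and to analyze each separately by standard non-parametric arguments; this is essentially a generalization of Proposition~\ref{prop:opt-bw-sto}, which treats the special case of an asymmetric rectangular kernel of order 0 (Storey's estimator).

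First, I would compute the bias. With the change of variable $u = (p-1)/h_m$,
\begin{displaymath}
  \Exp{\hat{\ppdf}(1)} = \int_{\mathbb{R}} K(u)\,\ppdf(1+uh_m)\,du\,.
\end{displaymath}
A Taylor expansion of $\ppdf$ around $1$ up to order $k$ (licit since $\ppdf$ is $k$-times differentiable there) combined with the moment conditions $\int K = 1$ and $\int u^j K(u)\,du = 0$ for $1 \le j \le k-1$ kills every term except the one involving $\ppdf^{(k)}(1)$ and a Peano remainder, yielding $\Exp{\hat{\ppdf}(1)} - \ppdf(1) = C_K\,\ppdf^{(k)}(1)\, h_m^k + \mathrm{o}(h_m^k)$ for a kernel-dependent constant $C_K$.

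Second, I would compute the variance. Since the $P_i$ are i.i.d.,
\begin{displaymath}
  \Var\!\left(\hat{\ppdf}(1)\right) = \frac{1}{m h_m^2}\Var\!\left(K\!\left(\tfrac{P-1}{h_m}\right)\right)\,,
\end{displaymath}
and the same change of variables shows that $\frac{1}{h_m}\Exp{K^2((P-1)/h_m)} \to \ppdf(1)\int K^2$, while the squared-mean term contributes $\mathrm{O}(1)$, hence $\Var(\hat{\ppdf}(1)) \sim \ppdf(1)\int K^2/(m h_m)$ (absorbed into $\ppdf(1)$ in the statement under a normalization of $K$). Balancing squared bias $h_m^{2k}$ against variance $(m h_m)^{-1}$ in the MSE gives $h_m^{2k+1} \asymp m^{-1}$, hence $h^\star_m(k) \asymp m^{-1/(2k+1)}$ and $\mathrm{MSE} \asymp m^{-2k/(2k+1)}$, establishing assertion~1.

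For the asymptotic normality in assertion~2, I would apply the Lindeberg--Feller CLT to the triangular array of centered variables $Z_{i,m} = h_m^{-1}K((P_i - 1)/h_m) - \Exp{h_m^{-1}K((P_i-1)/h_m)}$, which delivers $\sqrt{m h_m}(\hat{\ppdf}(1) - \Exp{\hat{\ppdf}(1)}) \rightsquigarrow \mathcal{N}(0, \ppdf(1))$. With $h_m(k) = h^\star_m(k)\eta_m^2 = m^{-1/(2k+1)}\eta_m^2$, the standard deviation is of order $m^{-k/(2k+1)}/\eta_m$, so multiplying by $m^{k/(2k+1)}\eta_m$ stabilizes it; meanwhile the bias is $\mathrm{O}(h_m^k) = \mathrm{O}(m^{-k/(2k+1)}\eta_m^{2k})$, which after the same rescaling becomes $\mathrm{O}(\eta_m^{2k+1}) \to 0$ and therefore does not contaminate the limit.

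The main technical obstacle is verifying the Lindeberg condition uniformly as $h_m \to 0$: one needs either compact support and boundedness of $K$ (in which case $|Z_{i,m}| \le 2\|K\|_\infty/h_m$ and the condition reduces to $h_m \to 0$ with $m h_m \to \infty$), or a moment bound such as $K \in L^{2+\delta}(\mathbb{R})$, which is standard and which I would assume implicitly along with the usual regularity assumptions on the kernel. The remaining steps (Taylor remainder control, dominated convergence in the variance change of variables) are routine once these assumptions are in place.
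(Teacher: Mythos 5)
Your proof is correct and follows essentially the same route as the paper, which states the bias and variance orders by citing Tsybakov and otherwise argues exactly as in its own proof of Proposition~\ref{prop:opt-bw-sto}: Taylor expansion against the kernel moment conditions for the bias, Lindeberg--Feller for the centered part, then bandwidth balancing. Your two caveats --- that the asymptotic variance is really $\ppdf(1)\int K^2$ so the stated limit requires the normalization $\int K^2=1$, and that the kernel must effectively be one-sided since $1$ is a boundary point of the support of $\ppdf$ --- are both left implicit in the paper (Storey's kernel $\mathbf{1}_{[-1,0]}$ satisfies both), and you were right to surface them.
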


Propositions~\ref{prop:opt-bw-sto} and~\ref{prop:opt-bw-k} show that the convergence rate of kernel estimators of $\ppdf(1)$ with asymptotically optimal bandwidth directly depends on the regularity $k$ of $\ppdf$ at $1$.  The only difference between the two propositions is that the assumption that the first $k-1$ derivatives of $\ppdf$ are null at 1 for $\pih(1-h_m)$ is not needed for $k^{\mathrm{th}}$ order kernel estimators.  
 Importantly, these convergence rates cannot be improved in our setting, in the sense that $m^{-k/(2k+1)}$ is the minimax rate for the estimation of a density at a point where its regularity is of order $k$ \cite[Chapter 2]{tsybakov03introduction}. 

\paragraph{Connection to previously proposed estimators.} 
To the best of our knowledge, the only non-parametric estimators of $\pi_0$ for which convergence rates have been established in our setting are those proposed by  \cite{storey02daf}, \cite{swanepoel99the-limiting} and \cite{hengartner95fsc}. 
We now briefly review asymptotic properties of these estimators in the context of multiple testing, as stated in \cite{genovese04a-stochastic}, and show that their convergence rates can essentially be recovered by Propositions~\ref{prop:opt-bw-sto} and~\ref{prop:opt-bw-k}.

\begin{description}
\item[Confidence envelopes for the density:]
\cite{hengartner95fsc} derived a finite sample confidence envelope for a monotone density. Assuming that $\pcdf$ is concave and that $\ppdf$ is Lipschitz in a neighborhood of $1$, \cite{genovese04a-stochastic} obtained an estimator which 
converged to $\ppdf(1)$ at rate $(\ln{m})^{1/3}m ^{-1/3}$.  The same rate of convergence can be achieved by Proposition~\ref{prop:opt-bw-sto} or~\ref{prop:opt-bw-k} (for $\eta_m = (\ln{m})^{-1/3}$) if we assume that $\ppdf$ is differentiable at 1.  This is a slightly stronger assumption than the ones made by~\cite{hengartner95fsc}, but it still corresponds to a regularity of order 1.
\item[Spacings-based estimator:] 
 \cite{swanepoel99the-limiting} proposed a two-step estimator of the minimum of an unknown density based on the distribution of the spacings between observations: first, the location of the minimum is estimated, and then the density at this point is itself estimated. Assuming that at the value at which the density $\ppdf$ achieves its minimum, $\ppdf$ and $\ppdf^{(1)}$ are null, and $\ppdf^{(2)}$ is bounded away from $0$ and $+\infty$ and Lipschitz,  then for any $\delta>0$, there exists an estimator converging at rate $(\ln{m})^\delta m^{-2/5}$ to the true minimum.
The same rate of convergence can be achieved by Proposition~\ref{prop:opt-bw-sto} or~\ref{prop:opt-bw-k} (for $\eta_m= (\ln{m})^{-\delta}$) if one assumes that $\ppdf$ is twice differentiable at 1 (and additionally that $\ppdf^{(1)}(1)=0$ for  Proposition~\ref{prop:opt-bw-sto}).  In our setting, the Lipschitz condition for the second derivative is unnecessary: the minimum of $\ppdf$ is necessarily achieved at $1$ because $\ppdf$ is non-increasing (under~\eqref{cond:concavity}), so the first step of the estimation in~\cite{swanepoel99the-limiting} may be omitted.  
\end{description}
As both estimators are estimators of $\ppdf(1)$, the differences in their asymptotic properties are driven by the differences in the regularity assumptions made for $\ppdf$ (or $\ppdfa$) near 1, rather than by their specific form. 

\section{Consistency, criticality and convergence rates of plug-in procedures}
\label{sec:conv-rate-cons}

The aim of this section is to derive convergence rates for plug-in procedures based on the estimators $\pih$ of $\pi_0$ studied in Section~\ref{sec:conv-rates-estim}.  Specifically, our goal is to establish central limit theorems for the threshold $\tPI(\alpha)$ of the plug-in procedure BH$(\alpha/\pih)$ and the associated False Discovery Proportion, which we denote by $\fdp_m(\tPI(\alpha))$.  The convergence results obtained by \cite{neuvial08asymptotic} cover a broad class of FDR controlling procedures, including the BH procedure and plug-in procedures based on estimators of $\pim$ that depend on the observations only through the empirical distribution function $\pecdf$ of the $p$-values \citep{storey02daf,storey04strong,benjamini06adaptive}.  Although these results were obtained in the conditional setting of \citet{benjamini95controlling}, extending them to the unconditional setting considered here is relatively straightforward, because the proof techniques developed in \cite{neuvial08asymptotic} can be adapted to this setting. For completeness, the asymptotic properties of the BH procedure and the plug-in procedure based on the Storey-$\lambda$ estimator are derived in Appendix~\ref{sec:extens-uncond-setting}.  The problem considered in this section is more challenging, as the kernel estimators introduced in Section~\ref{sec:conv-rates-estim} depend on  $m$ not only through $\pecdf$, but also through the bandwidth of the kernel (e.g. $h_m$ for $\pihSto(1-h_m)$).

Let $\pih$  denote a generic estimator of $\pi_0$.  We assume that $\pih$ converges in probability to $\pi_{0,\infty} \leq 1$ as $m \to +\infty$.  We do not assume that $\pi_{0,\infty} = \pi_0$.  Therefore, $\pih$ may or may not be a consistent estimator of $\pi_0$.  We recall that the BH$(\alpha/\pih)$ procedure rejects all hypotheses with $p$-values smaller than
\begin{equation*}
  \tPI(\alpha) = \sup\left\{t \in [0,1], \pecdf(t) \geq \pih t/\alpha\right\}\,.
\end{equation*}
We now study the behavior of the  BH$(\alpha/\pih)$ procedure when  $\pih$ converges at a rate $r_m$ slower than the parametric rate $m^{-1/2}$ (i.e., $m^{-1/2} = \po{r_m}$).  
We define the asymptotic threshold $\tPIinf(\alpha)$ corresponding to $\tPI(\alpha)$ as 
\begin{equation}
  \label{eq:asy-tau-plug-in}
  \tPIinf(\alpha) = \sup\left\{t \in [0,1], \pcdf(t) \geq \pi_{0,\infty} t/\alpha\right\}\,.
\end{equation}
We have $\tPIinf(\alpha) = \tBHinf(\alpha/\pi_{0,\infty})$, that is, the asymptotic threshold of the  BH procedure defined in Equation~\eqref{eq:asymptotic-threshold-BH}  at level $\alpha/\pi_{0,\infty}$.

\begin{theorem}[Asymptotic properties of plug-in procedures]
\label{thm:asy-prop-plug-in}
Let $\pih$ be an estimator of $\pi_0$  such that $\pih \to \pi_{0,\infty}$ in probability as $m \to +\infty$.
Let $\asPI=\pi_{0,\infty}  \as_{BH}$. Then: 
\begin{enumerate}
  \item $\asPI$ is the critical value of the BH$(\alpha/\pih)$
    procedure;
\item Further assume that the asymptotic distribution of $\pih$ is given by
\begin{displaymath}
  \sqrt{m h_m} \left(\pih - \pi_{0,\infty}\right) \rightsquigarrow \mathcal{N}(0, s_0^2)\,
\end{displaymath}
for some $s_0$, with $h_m = \po{1/\ln\ln m}$ and $mh_m \to +\infty$ as $m \to +\infty$. Then,  under~\eqref{cond:concavity}, for any $\alpha > \asPI$,
  \begin{enumerate}
  \item The asymptotic distribution of the threshold $\tPI(\alpha)$ is
    given by
    \begin{displaymath}
      \sqrt{mh_m} \left(\tPI(\alpha) - \tPIinf(\alpha)\right) \rightsquigarrow \mathcal{N}\left(0, \left(\frac{s_0 \tPIinf(\alpha)/\alpha}{\pi_{0,\infty}/\alpha - \ppdf(\tPIinf(\alpha))}\right)^2\right)
    \end{displaymath}
  \item The asymptotic distribution of the FDP achieved by the
    BH$(\alpha/\pih)$ procedure is given by
    \begin{displaymath}
      \sqrt{mh_m}\left(\fdp_m(\tPI(\alpha))-\frac{\pi_0\alpha}{\pi_{0, \infty}}\right) \rightsquigarrow \mathcal{N}\left(0, \left(\frac{\pi_0\alpha s_0}{\pi_{0,\infty}^2}\right)^2 \right)\,.
    \end{displaymath}
  \end{enumerate}
\end{enumerate}
\end{theorem}

Theorem~\ref{thm:asy-prop-plug-in} states that for  $\alpha>\asPI$, for any estimator $\pih$ that converges in distribution at a rate $r_m$ slower than the parametric rate $m^{-1/2}$, the plug-in procedure BH$(\alpha/\pih)$ converges at rate 
$r_m$ as well.  This is a consequence of the fact that $r_m$ dominates the fluctuations of $\pecdf$, which are of parametric order.

We now state the main result of the paper (Corollary~\ref{cor:fdp-opt-bw}), that is, the asymptotic properties of plug-in procedures associated with the estimators of $\pi_0$ studied in Section~\ref{sec:conv-rates-estim}, for which $s_0^2=\ppdf(1)$.  This result can be derived by combining the results of Theorem~\ref{thm:asy-prop-plug-in} with those of Propositions~\ref{prop:opt-bw-sto} and~\ref{prop:opt-bw-k}.

  \begin{corollary}
    \label{cor:fdp-opt-bw}
    Assume that \eqref{cond:concavity} holds, and
    that $\ppdf$ is $k$ times differentiable at $1$ for $k \geq 1$.
    Define $h_m(k)=m^{-1/(2k+1)}\eta_m^2$, where $\eta_m \to 0$ and
    $m^{k/(2k+1)}\eta_m \to +\infty$ as $m \to +\infty$.  Denote by
    $\pih^k$ one of the following two estimators of $\pi_0$:
    \begin{itemize}
    \item Storey's estimator $\pihSto(1-h_m(k))$; in this case, it is
      further assumed that $\ppdf^{(l)}(1)=0$ for $1 \leq l < k$;
    \item A kernel estimator of $\ppdf(1)$ associated with a $k^{\rm
        th}$ order kernel with bandwidth $h_m(k)$.
    \end{itemize}
    Then 
    \begin{enumerate}
    \item $\asPI=\ppdf(1) \as_{BH}$ is the critical value of the
    BH$(\alpha/\pih^k)$ procedure;
    \item For any $\alpha > \asPI$,
      \begin{enumerate}
      \item The asymptotic distribution of the threshold
        $\tPI(\alpha)$ is given by
        \begin{displaymath}
          m^{k/(2k+1)}\eta_m \left(\tPI(\alpha) - \tPIinf(\alpha)\right)  \rightsquigarrow \mathcal{N}\left(0, \left(\frac{\tPIinf(\alpha)/\alpha}{\ppdf(1)/\alpha - \ppdf(\tPIinf(\alpha))}\right)^2 \ppdf(1) \right)
        \end{displaymath}
      \item The asymptotic distribution of the FDP achieved by the
        BH$(\alpha/\pih)$ procedure is given by
        \begin{displaymath}
          m^{k/(2k+1)}\eta_m \left(\fdp_m(\tPI(\alpha))-\frac{\pi_0\alpha}{\ppdf(1)}\right) \rightsquigarrow  \mathcal{N}\left(0, \frac{\pi_0^2\alpha^2}{\ppdf(1)^3}\right)\,. 
        \end{displaymath}
      \end{enumerate}
    \end{enumerate}
  \end{corollary}
  We note that unlike the modification of the Storey-$\lambda$ estimator studied here, the estimators of $\pi_0$ based on kernels of order $k$ do not require the first $k-1$ derivatives of $g$ at 1 to be null.  Therefore, the latter are generally preferable to the former.   
Corollary~\ref{cor:fdp-opt-bw} has the following consequences, which are also summarized in Table~\ref{tab:asymptotic-variances}:
  \begin{itemize}
  \item Assume that~\eqref{cond:purity} is met.  Then the asymptotic
    threshold of the BH$(\alpha/\pih)$ procedure is
    $\tBHinf(\alpha/\pi_0)$, that is, the asymptotic threshold of the
    Oracle procedure BH$(\alpha/\pi_0)$.  In particular, the
    asymptotic \fdp\ achieved by the estimators in
    Corollary~\ref{cor:fdp-opt-bw} is then \emph{exactly} $\alpha$
    (and its asymptotic variance is $\alpha^2/\pi_0$), whereas the
    asymptotic \fdp\ of the original BH procedure is $\pi_0 \alpha$.

  \item We have: 
    \begin{align}
      \label{eq:critical-values}
      \bs \leq  \asPI  \leq \as_{\textnormal{Sto}(\lambda)} \leq \as_{BH}
    \end{align}
    In models where \eqref{cond:critic} is not satisfied, all the critical values in \eqref{eq:critical-values} are null, implying that all the corresponding procedures have positive power for any target FDR level.  In models where \eqref{cond:critic} is satisfied, all the critical values in \eqref{eq:critical-values} are positive, and \eqref{eq:critical-values} implies that the range of target FDR
    values $\alpha$ that yield asymptotically positive power is larger
    for the plug-in procedures studied in this paper than for the
    BH procedure or the Storey-$\lambda$ procedure.

  \item We have $\tPIinf(\alpha)  \geq \tStoinf(\alpha) \geq 
    \tBHinf(\alpha)$, where $\tStoinf(\alpha)$ denotes the asymptotic threshold of the Storey-$\lambda$ procedure, which is formally defined and characterized in Appendix~\ref{sec:extens-uncond-setting}.  Therefore,  as the power of a thresholding-based FDR
    controlling procedure is a non-decreasing function of its
    threshold (Remark~\ref{rk:increased-power}),
    the asymptotic power
    of the BH$(\alpha/\pih)$ procedure is greater than that of both the Storey-$\lambda$ and the
    original BH procedures, even in the range $\alpha>\as_{BH}$
    where all of them have positive asymptotic power.
  \end{itemize}
\begin{table}[h]
  \centering
  \begin{tabular}{lcccc}
    Name & $\pih$ 
    & $\fdr/\alpha$ & Rate & (Asy. var. of FDP)/ FDR\\ \hline 

    BH & 
    $1$
    & $\pi_0$ 
    & $m^{-1/2}$ & $(\pi_0\tBHinf(\alpha))^{-1}-1$\\

    Oracle BH  &
    $\pi_0$
    & $1$
    & $m^{-1/2}$ & $(\tBHinf(\alpha/\pi_0))^{-1}-1$\\ 

    Storey-$\lambda$ &
    $\pihSto(\lambda)$ 
    & $\pi_0/\pibSto$ 
    & $m^{-1/2}$ & $ (\pi_0\tStoinf(\alpha))^{-1} + (1-\pcdf(\lambda))^{-1} $\\

    Kernel($h_m(k)$) &
    $\pih^k$
    & $\pi_0/\ppdf(1)$
    & $m^{-k/(2k+1)}$ & $\ppdf(1)^{-1}$\\
  \end{tabular}
  \caption{Summary of the asymptotic properties of the FDR controlling procedures considered in this paper, for a target FDR level $\alpha$ greater than the (procedure-specific) critical value.  Note that  ``Storey-$\lambda$'' denotes the original procedure with a fixed $\lambda$, while our extension with $\lambda=1-h_m(k)$ is categorized in the table as a particular case of kernel estimator (last row).  For Storey-$\lambda$, we also assume that $\lambda>\tStoinf(\alpha)$.}
  \label{tab:asymptotic-variances}
\end{table}

  These results characterize the increase in asymptotic power achieved
  by plug-in procedures based on kernel estimators of
  $\pi_0$.  However, this increased asymptotic power comes at the
  price of a slower convergence rate. Specifically, the convergence
  rate of plug-in procedures is the non-parametric rate
  $m^{-k/(2k+1)}/\eta_m$ (where $k$ controls the regularity of
  $\ppdf$) for the BH$(\alpha/\pih^k)$ procedure, while the parametric
  rate $m^{-1/2}$ was achieved by the original BH procedure, the
  Oracle BH procedure, and the Storey-$\lambda$ procedure
  (as proved in Appendix~\ref{sec:extens-uncond-setting}).

\section{Application to location and Student models}
\label{sec:exampl-locat-models}
In Section~\ref{sec:conv-rate-cons} we proved that the asymptotic behavior of plug-in procedures depends on whether the target FDR level $\alpha$ is above or below the critical value $\asPI$ characterized by Theorem~\ref{thm:asy-prop-plug-in}, and by establishing convergence rates for these procedures when $\alpha > \asPI$.  Both the critical value $\asPI$ and the obtained convergence rates depend on the test statistics distribution. In the present section,  these results are applied to Gaussian and Laplace location models, and to  the Student model. 
We begin by defining these models (Section~\ref{sec:models-test-stat})  and studying criticality in each of them (Section~\ref{sec:criticality}). 
Then, we derive convergence rates for plug-in procedures based on the  kernel estimators of $\pi_0$ considered in Sections \ref{sec:conv-rates-estim} and \ref{sec:conv-rate-cons}, both for two-sided tests (Section~\ref{sec:two-sided-models}) and one-sided tests (Section~\ref{sec:one-sided-models}).

\subsection{Models for the test statistics}
\label{sec:models-test-stat}

\paragraph{Location models.} 
In location models the distribution of the test statistic under $\ha$ is a shift from that of the test statistic under $\hn$: $\tcdfa=\tcdfn(\cdot-\theta)$ for some location parameter $\theta>0$. The most widely studied location models are the Gaussian and Laplace (double exponential) location models.  Both the Gaussian and the Laplace distribution can be viewed as instances of a more general class of distributions introduced by~\cite{subbotin23on-the-law-of-frequency} and given for $\gamma \geq 1$ by 
\begin{equation}
  \tpdfn^\gamma(x) = \frac{1}{C_\gamma}\e{-\vert x \vert ^\gamma/\gamma}
, \mbox{ with } C_\gamma= \int_{-\infty}^{+\infty} e^{-|x|^\gamma/\gamma} dx = 2 \Gamma(1/\gamma) \gamma^{1/\gamma-1}\,.
\end{equation}
Therefore, the likelihood ratio in the $\gamma$-Subbotin location model may be written as 
\begin{eqnarray}
\label{eq:lr-subbotin}
  \lrg(x) & = & \expo{\frac{\vert x \vert ^\gamma}{\gamma} - \frac{\left\vert x-\theta \right\vert ^\gamma}{\gamma}}\,.
\end{eqnarray}
The Gaussian case corresponds to $\gamma=2$ and the Laplace case to $\gamma=1$. In the Laplace case, the  distribution of the $p$-values under the alternative can be derived explicitly, see Lemma~\ref{lm:laplace} in Appendix.  We focus on $1 \leq \gamma \leq 2$ as this corresponds to situations in which~\eqref{cond:concavity} is fulfilled.  Specifically, for one-sided tests,~\eqref{cond:concavity} holds as soon as  $\gamma \geq 1$, because then $\ilrg$ is non-decreasing; for two-sided tests, if additionally  $\gamma \leq 2$, then~\eqref{cond:concavity} holds (as proved in Appendix~\ref{sec:calc-spec-models}, Proposition~\ref{prop:concavity-two-sided-subbotin}). 

\paragraph{Student model.}
\label{sec:student-models}
Student's $t$ distribution is widely used in applications, as it naturally arises when testing equality of means of Gaussian random variables with unknown variance.  
In the Student model with parameter $\nu>0$, $\tcdfn$ is the (central) $t$ distribution with $\nu$ degrees of freedom, and $\tcdfa$ is the non-central $t$ distribution with $\nu$ degrees of freedom and non-centrality parameter  $\theta>0$.  The Student model is not a location model, as $\tcdfa$ cannot be written as a translation of $\tcdfn$. 
Following~\citet[Equation (3.5)]{chi07on-the-performance}, we note that the likelihood ratio of the Student model may be written as
  \begin{eqnarray}
    \label{eq:lr-student}
      \lr(t) & = & \sum_{j=0}^{+\infty} a_j(\nu, \theta) \psi_{(j,\nu)}(t)\,,
  \end{eqnarray}
where  $\psi_{(j,\nu)}(t) = (t/\sqrt{t^2+\nu})^j = \sgn(t)^j \left(1+\nu/t^2\right)^{-j/2}$ for $t \in \mathbb{R}$ and 
\begin{align}
\label{eq:def-lr-student-a}
  a_j(\nu, \theta) & =    \e{-\theta^2/2}\frac{\Gamma((\nu+j+1)/2)}{\Gamma((\nu+1)/2)}\frac{(\sqrt{2}\theta)^{j}}{j!}\,.
\end{align}

\begin{remark} The sequence $a_j(\nu, \theta)$ is positive, and it is not hard to see that $(\sum_j a_j(\nu, \theta))$ is a convergent series using Stirling's formula.  Therefore, as  $\psi_{(j,\nu)}(t) \in [-1,1]$, the dominated convergence theorem ensures that Equation~\eqref{eq:lr-student} is well-defined for any $t \in \mathbb{R}$.
\end{remark}

Another useful expression for the Student likelihood ratio may be derived from the integral expression of the density of a non-central $t$ distribution given by \cite{johnson40applications}:
  \begin{align}\label{eq:lr-student-integral}
    \lr(t) & =    
    \exp{\left[-\frac{\theta^2}{2}~\frac{1}{1+\frac{t^2}{\nu}}\right]} ~ 
    \frac{\Hh_\nu\left(-\frac{\theta t}{\sqrt{\nu+t^2}}\right)}{\Hh_\nu(0)}\,,
  \end{align}
where $  \Hh_\nu(z) = \int_0^{+\infty}\frac{u^\nu}{\nu!}e^{-\frac{1}{2}(u+z)^2}dx$.
As noted by \citet[Section 3.1]{chi07on-the-performance}, the likelihood ratio of Student test statistics is non-decreasing, which implies that~\eqref{cond:concavity} holds for one-sided tests. It also holds for two-sided tests, as proved in  Appendix~\ref{sec:calc-spec-models}, Proposition~\ref{prop:concavity-two-sided-student}.\\

The location models and the Student model considered here are parametrized by two parameters: (i) a non-centrality parameter $\theta$, which encodes a notion of distance between $\hn$ and $\ha$; (ii) a parameter which controls the (common) tails of the distribution under $\hn$ and $\ha$: $\gamma$ for the $\gamma$-Subbotin model, and $\nu$ for the Student model with $\nu$ degrees of freedom.

\subsection{Criticality}\label{sec:criticality}
As the asymptotic behavior of plug-in procedures crucially depends on whether the target FDR level is above or below the critical value $\asPI$ characterized by Theorem~\ref{thm:asy-prop-plug-in}, it is of primary importance to study criticality in the models we are interested in.  Noting that $\asPI=\pi_{0, \infty} \as_{BH} = \pi_{0, \infty} \bs/\pi_0$, we have $\asPI>0$ if and only if \eqref{cond:critic} is satisfied, that is, if and only if the likelihood ratio $\ilr$ is bounded near $+\infty$.    In this section, we study \eqref{cond:critic}  in location and Student models. 

\paragraph{Location models.} In location models, where $\tpdfa=\tpdfn(\cdot- \theta)$ with $\theta>0$, the behavior of the likelihood ratio is closely related to the tail behavior of the distribution of the test statistics: for a given non-centrality parameter $\theta$, the heavier the tails, the smaller the difference between $\tpdfa$ and $\tpdfn$.  In a  $\gamma$-Subbotin location model, Equation~\eqref{eq:lr-subbotin} yields $\left\vert 1-\theta/x \right\vert ^\gamma \sim 1-\gamma\theta/x$ as $x \to +\infty$.  Thus $\vert x \vert ^\gamma \left(1- \left\vert 1-\theta/x \right\vert ^\gamma \right) \sim \gamma \theta x^{\gamma-1}$, and the behavior of the likelihood ratio $\ilrg$ is driven by the value of $\gamma$, as illustrated by Figure~\ref{fig:critic-subbotin} for the Gaussian and Laplace location models with location parameter $\theta \in \{1, 2\}$.

\begin{figure}[!ht]
  \centering
  \begin{tabular}{cc}
    Gaussian distribution ($\theta=1$) &     Laplace distribution ($\theta=1$)  \\
    \includegraphics[width=.45\columnwidth, trim=0 30 0 31, clip]{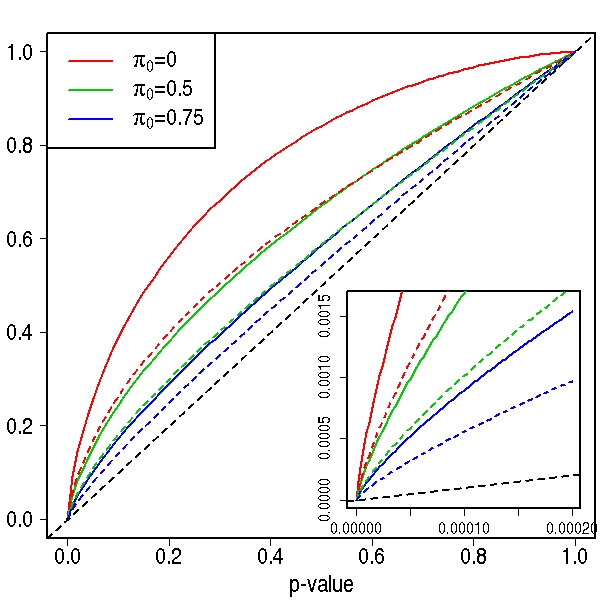} & 
    \includegraphics[width=.45\columnwidth, trim=0 30 0 31, clip]{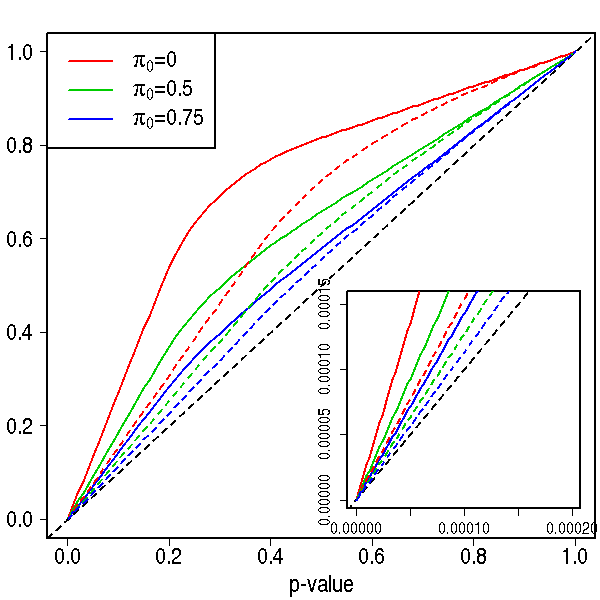}\\
    Gaussian distribution ($\theta=2$) &     Laplace distribution ($\theta=2$)  \\
    \includegraphics[width=.45\columnwidth, trim=0 0 0 31, clip]{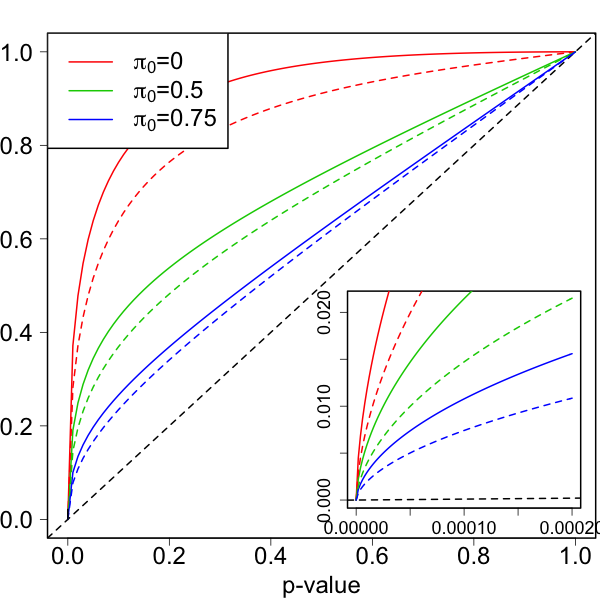} & 
    \includegraphics[width=.45\columnwidth, trim=0 0 0 31, clip]{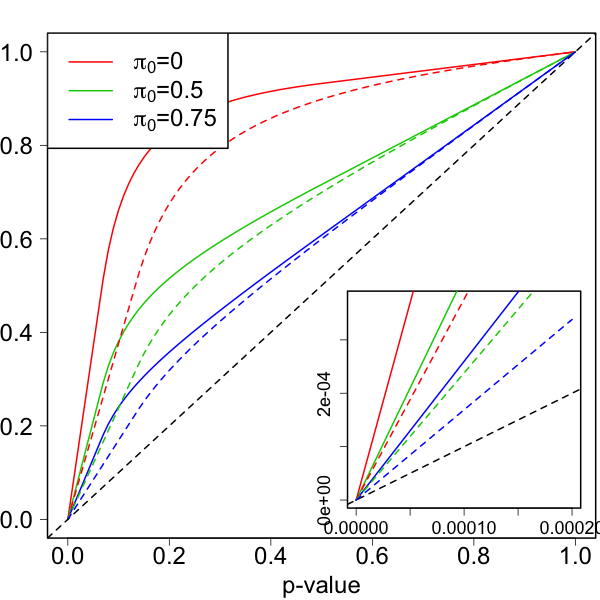}\\
\end{tabular}
\caption{\emph{Distribution functions $\pcdf$ for one-sided (solid) and two-sided (dashed) $p$-values, in Gaussian location models (left: \eqref{cond:critic} is not satisfied), and Laplace location models (right: \eqref{cond:critic} is satisfied) for $\pi_0=$0, 0.5 and 0.75. The location parameter $\theta$ is set to 1 in top panels and 2 in bottom panels.  Inserted plot: zoom in the region $p<2.10^{-4}$.}}
\label{fig:critic-subbotin}
\end{figure}

If $\gamma>1$, then $\lim_{+\infty} \ilrg = +\infty$.  Therefore, the slope of the cumulative distribution function of the $p$-values is infinite at 0, and \eqref{cond:critic} is not satisfied for the Subbotin model: $\bs=0$ for any $\theta$ and $\pi_0$.
This situation is illustrated by Figure \ref{fig:critic-subbotin} (left panels) for the Gaussian model ($\gamma=2$).  In such a situation, for any target FDR level $\alpha$, the asymptotic fraction of rejections by the BH$(\alpha)$ procedure or by a plug-in procedure of the form BH$(\alpha/\pih)$, where $\pih \to \pi_{0,\infty}$ in probability as $m \to +\infty$, is positive by Lemma~\ref{lm:critic-plug-in}. 

If $\gamma = 1$ (Laplace model, as illustrated by Figure~\ref{fig:critic-subbotin}, right panels),  then the likelihood ratio of the model is $\ilrg(x) = \exp(\vert x \vert - \vert x-\theta \vert)$.  It is bounded as $x \to +\infty$, with $\lim_{x \to +\infty} \ilrg(x) = e^\theta$.  Therefore, \eqref{cond:critic} is satisfied for the Laplace location model.  Specifically, we have $\bs = \pi_0/(\pi_0 +(1-\pi_0)\ppdfa(0))$, with $\ppdfa(0) = e^\theta$ for one-sided $p$-values, and $\ppdfa(0) = \cosh{\theta}$ for two-sided $p$-values.   Laplace-distributed test statistics appear as a limit situation in terms of criticality: within the family of $\gamma$-Subbotin location models with $\gamma \in [1,2]$, the Laplace model ($\gamma=1$) is the only  one for which \eqref{cond:critic} is satisfied.

  \paragraph{Student
    model.} 
  For the Student model, Equation~\eqref{eq:lr-student-integral}
  yields that $(\ilr)(t)$ converges to $s_\nu(\theta)$ as $t \to
  +\infty$ and $s_\nu(-\theta)$ as $t \to -\infty$, where $
  s_\nu(\theta) = \Hh_\nu(-\theta)/\Hh_\nu(0)$ is positive for any
  $\theta$.
  Therefore, \eqref{cond:critic} is satisfied for one-sided and two-sided
  tests in the Student model (this had already been noted
  by~\cite{chi07on-the-performance} for one-sided tests).
  Figure~\ref{fig:critic-student} gives the distribution function of
  one- and two-sided p-values in the Student model with parameters
  $\theta \in \{1, 2\}$ and $\nu \in \{10, 50\}$, for $\pi_0 \in \{0,
  0.5, 0.75\}$.  Although criticality is much less obvious than for
  the Laplace model, the inserted plots which zoom into a region where
  the $p$-values are very small ($p < 2.10^{-4}$) do suggest for
  $\nu=10$ that the slope of the distribution function at 0 is linear
  for the Student model.  As an illustration, we calculated that the
  critical values for one-sided tests in the Student model for
  $\pi_0=0.75$ for $\theta \in \{1, 2\}$ are respectively 0.173 and
  0.015 for $\nu=10$, and $4.10^{-3}$ and $7.10^{-6}$ for $\nu=50$.

  \begin{figure}[!ht]
    \centering
    \begin{tabular}{cc}
      Student ($\nu=50$, $\theta=1$) &     Student ($\nu=10$, $\theta=1$)  \\
      \includegraphics[width=.45\columnwidth, trim=0 30 0 31, clip]{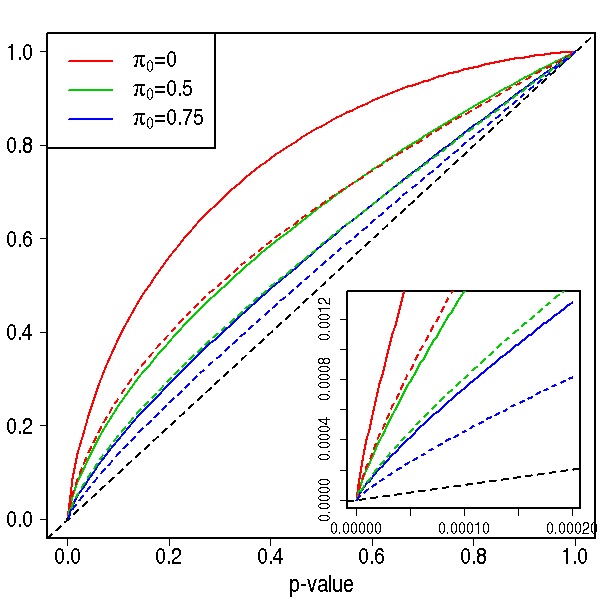} & 
      \includegraphics[width=.45\columnwidth, trim=0 30 0 31, clip]{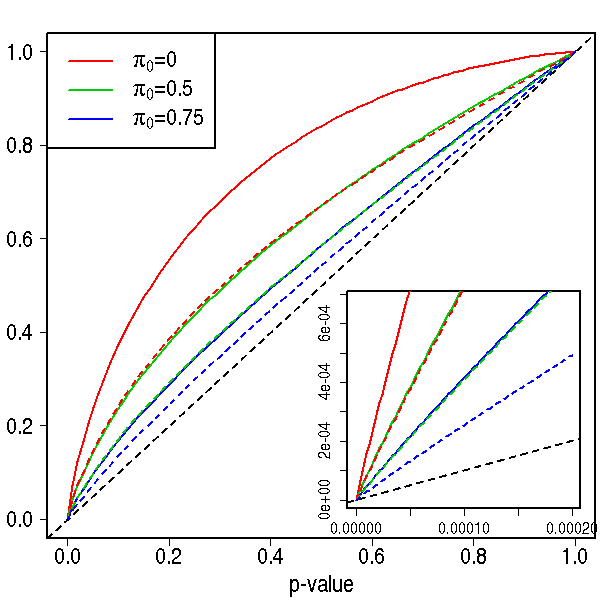} \\
      Student ($\nu=50$, $\theta=2$) &     Student ($\nu=10$, $\theta=2$)  \\
      \includegraphics[width=.45\columnwidth, trim=0 0 0 31, clip]{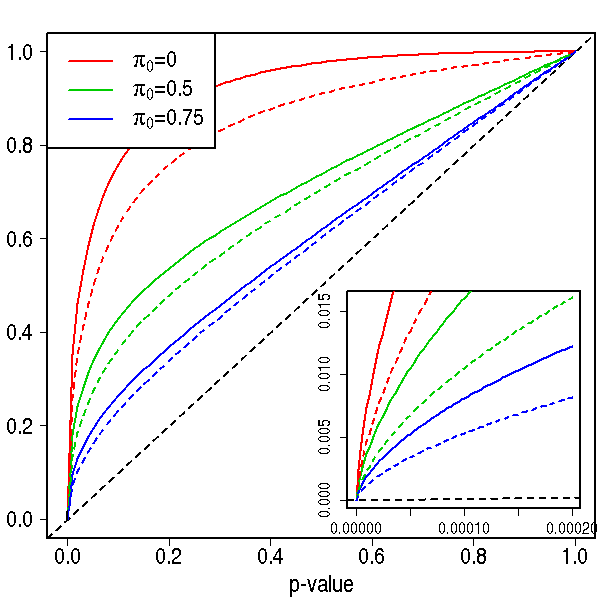} & 
      \includegraphics[width=.45\columnwidth, trim=0 0 0 31, clip]{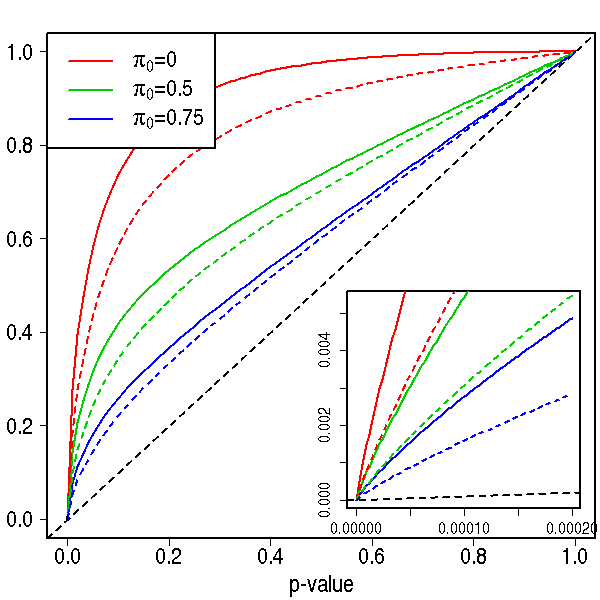}\\
    \end{tabular}
 \caption{\emph{Distribution functions $\pcdf$ for one-sided tests (solid) and two-sided tests (dashed) in Student models with $\nu=100$ degrees of freedom (left) and $\nu=10$ (right). The location parameter $\theta$ was set to 1 in top panels and 2 in bottom panels.  Any Student model satisfies \eqref{cond:critic}.  Inserted plots: zoom in the region $p < 2.10^{-4}$.}}
 \label{fig:critic-student}
\end{figure}
\subsection{Consistency and convergence rates for two-sided tests}
\label{sec:two-sided-models}

\subsubsection*{Consistency.}
Let us first recall that by Proposition~\ref{prop:p-values}.2, we have for two-sided tests under a model satisfying~\eqref{cond:symmetry}:
\begin{equation}\label{eq:ppdfa-two-sided}
  \ppdfats(t) = \frac{1}{2}\left(\lr\left(q_0(t/2)\right) + \lr\left(-q_0(t/2)\right) \right)\,,
\end{equation}
where $q_0: t \mapsto \tcdfn^{-1}(1-t)$ tends to 0 as $t \to 1/2$. A straightforward consequence of (\ref{eq:ppdfa-two-sided}) is that $\ppdfats (1) = (\ilr)(0)$. As $\tpdfa>0$, we have $\ppdf(1) = \pi_0+(1-\pi_0) \ppdfats (1) > \pi_0$.  Therefore,~\eqref{cond:purity} is not met, and the kernel estimators of $\pi_0$ studied in Section~\ref{sec:conv-rates-estim} are not consistent for the estimation of $\pi_0$.  Specifically, we have $\ppdfats(1) =e^{-\theta^2/2}$ for Gaussian and Student test statistics, and $\ppdfats(1) =\e{-\theta}$ for Laplace test statistics.

\subsubsection*{Convergence rates.}
Another consequence of (\ref{eq:ppdfa-two-sided}) is that if for $k\geq 1$ the likelihood ratio $\ilr$ is $k$ times semi-differentiable at 0, then  $\ppdfts$ is $k$ times (left-)differentiable at 1.  
In particular, this holds \emph{for any $k$} in the $\gamma$-Subbotin location model with $\gamma \in [1,2]$, which covers the Gaussian and Laplace cases.  It also holds for the Student model (as proved in Proposition~\ref{prop:lr-student-derivatives}).  For these models, Corollary~\ref{cor:fdp-opt-bw} entails that for any $k>0$, if $\pih$ is a kernel estimator of $\ppdf$ associated with a $k^{\rm th}$ order kernel with bandwidth $h_m(k)=m^{-1/(2k+1)}\eta_m^2$ (where $\eta_m \to 0$ and $m \eta_m \to +\infty$ as $m \to +\infty$), then the corresponding plug-in procedure BH$(\alpha/\pih)$ converges in distribution at rate $m^{-k/(2k+1)}/\eta_m$ for any $\alpha$ greater than $\asPI=\ppdf(1) \as_{BH}$. 
These results are summarized in the last column of Table~\ref{tab:location-models}.

Let us now consider the modification of the Storey-$\lambda$ estimator introduced in Section~\ref{sec:conv-rates-estim}: $\pih=\pihSto(1-h_m)$, with $h_m \to 0$ as $m \to +\infty$.  By Corollary~\ref{cor:fdp-opt-bw}, the optimal convergence rate of the  BH$(\alpha/\pih)$ procedure is then determined by the order of the first non null derivative of $\ppdfts$ at 1.  In order to calculate this order, we use the following lemma:

\begin{lemma}[Behavior of $\ppdfa$ at 1 for two-sided $p$-values in symmetric models]
\label{lm:regularity-two-sided-p}
Under~\eqref{cond:symmetry}, the density function  $\ppdfats$  of two-sided $p$-values under the alternative hypothesis satisfies:
  \begin{enumerate}
  \item If $\ilr$ is semi-differentiable at 0, with left-derivative $\ell_-$ and right-derivative $\ell_+$, then $\ppdfats^{(1)}$ is semi-differentiable at 1 and we have:
    \begin{displaymath}
      \ppdfats^{(1)} (1) = - \frac{\ell_+-\ell_-}{4\tpdfn(0)}\,.
    \end{displaymath}
In particular, $\ppdfats^{(1)} (1) =0$ if and only if $\ilr$ is differentiable at 0.
  \item If $\ilr$ is twice differentiable at 0, then $ \ppdfats^{(1)}$ is twice differentiable at 1 and we have:
    \begin{displaymath}
      \ppdfats^{(2)} (1) = \frac{1}{4\tpdfn(0)^2}\left(\lr\right)^{(2)}(0)\,.
    \end{displaymath}
  \end{enumerate}
\end{lemma}
Lemma~\ref{lm:regularity-two-sided-p} may be applied to  two-sided tests for $\gamma$-Subbotin location models, and for the Student model.  For the two-sided Gaussian model, $\ilr$ is $C^\infty$ near 0 and $(\ilr)^{(2)}(0) \neq 0$.  The same holds for the  two-sided Student model, as shown in Appendix~\ref{sec:student-model} (Proposition~\ref{prop:lr-student-derivatives}). For both models, Lemma~\ref{lm:regularity-two-sided-p} entails that  $\ppdfts^{(1)}(1)=0$ and $\ppdfts^{(2)}(1)>0$.  For two-sided Laplace test statistics, the likelihood ratio $\ilr:t \mapsto \expo{\vert t-\theta \vert - \vert t \vert}$ has a singularity at $t=0$ but it is semi-differentiable at 0 (and differentiable on $(-\infty, \theta)\setminus \{0\}$), with left and right derivatives at 0 given by $\ell_- = 0$ and $\ell_+ = \e{-\theta}$. Lemma~\ref{lm:regularity-two-sided-p} yields that 
 $\ppdfts^{(1)}(1)=-(1-\pi_0)\e{-\theta}/2$. 
In particular, letting $k=1$ for the Laplace model and $k=2$ for the Gaussian and Student models, Corollary~\ref{cor:fdp-opt-bw} yields that if $\pih = \pihSto(1-m^{-1/(2k+1)}\eta_m^2)$, where $\eta_m \to 0$, then for any $\alpha > \asPI=\ppdf(1) \as_{BH}$, the FDP of the BH$(\alpha/\pih)$ procedure converges in distribution at rate $m^{-k/(2k+1)}/\eta_m$ toward $\pi_0\alpha/\ppdf(1)$, where $\ppdf(1) = \pi_0 + (1-\pi_0) e^{-\theta^2/2}$ in the Gaussian and Student models, and $\ppdf(1) = \pi_0 + (1-\pi_0) e^{-\theta}/2$ in the Laplace model. 
These rates are slower than those obtained at the beginning of this section for $k^{\mathrm{th}}$ order kernels because the latter do not require the derivatives of $\ppdf$ of order $l<k$ to be null at 1, which implied that any $k>0$ could be chosen (see Table~\ref{tab:location-models} for a comparison).

\subsection{Consistency and convergence rates for one-sided tests}
\label{sec:one-sided-models}

\subsubsection*{Consistency.}
For one-sided tests, we have $\ppdfaos(t) = (\ilr)(q_0(t))$.  As  $\lim_{t \to 1} q_0(t)=-\infty$,~\eqref{cond:purity} is met if and only if the likelihood ratio $(\ilr)(t)$ tends to 0 as $t \to-\infty$.  For the Student model, $\ilr$ tends to $s_\nu(-\theta)>0$ as $t \to -\infty$.  This implies that~\eqref{cond:purity} is not satisfied in that model: $\pi_0$ cannot be consistently estimated using a consistent estimator of $\ppdfos(1)$, because $\ppdfos(1) =\pi_0 +(1-\pi_0)e^{-\theta}> \pi_0$. 
  For location models, we begin by establishing a connection between purity and criticality (Proposition~\ref{prop:purity-critic}), which is a consequence of the following symmetry property:
\begin{lemma}[Likelihood ratios in symmetric location models]
  \label{lm:purity-critic}
  Consider a location model
in which the test statistics have densities $\tpdfn$ under $\hn$, and $\tpdfa=\tpdfn(\cdot-\theta)$ under $\ha$ for some $\theta \neq 0$. Under \eqref{cond:symmetry}, we have
  \begin{displaymath}
    \lim_{-\infty} \frac{\tpdfn}{\tpdfa} = \lim_{+\infty} \frac{\tpdfa}{\tpdfn}\,.
  \end{displaymath}
\end{lemma}
For one-sided tests in symmetric location models, Lemma~\ref{lm:purity-critic} implies the following result:
\begin{proposition}[Purity and criticality for one-sided tests in symmetric location models]
  \label{prop:purity-critic}
Let $\ppdfaos$ be the density of \emph{one-sided} $p$-values under the alternative hypothesis, and $\bs$ the critical value of the multiple testing problem. Under~\eqref{cond:symmetry} and~\eqref{cond:concavity},
  \begin{enumerate}
  \item \eqref{cond:critic} and~\eqref{cond:purity} are complementary events, in the sense that $\bs=0$ if and only if $\ppdfaos(1)=0$;
  \item If $\lim_{+\infty} \ilr$ is finite, then $\bs = \pi_0/\left(\pi_0 +(1-\pi_0)\ppdfaos(0)\right)$ and $\ppdfos(1)=\pi_0 +(1-\pi_0)\ppdfaos(1)$ are connected by $\ppdfaos(0) \ppdfaos(1)=1$.
  \end{enumerate}
\end{proposition}
Proposition~\ref{prop:purity-critic} implies that contrary to two-sided location models, in which we always have $\ppdfats(1)>0$, consistency \emph{may} be achieved in one-sided location models using kernel estimators such as those considered here, depending on model parameters.  In particular, there is no criticality in the one-sided Gaussian model, implying that~\eqref{cond:purity} is satisfied in that model: we have $\ppdfos(1) = \pi_0$, and $\pi_0$ can be consistently estimated using the kernel estimators of $\ppdfos(1)$ introduced in Section~\ref{sec:conv-rates-estim}.   In the one-sided Laplace model, \eqref{cond:critic} is satisfied, implying that~\eqref{cond:purity} is not satisfied in that model: $\pi_0$ cannot be consistently estimated using these kernel estimators of $\ppdfos(1)$. 

\subsubsection*{Convergence rates.}
  \paragraph{Student.}
  Proposition~\ref{prop:lr-student-derivatives} entails that for the
  one-sided Student model, $\ppdfa$ is $C^\infty$, and all its
  derivatives or order greater than 1 are null at 1.  Therefore, any
  $k>0$, if $\pih^k$ denotes any of the two estimators studied in
  Corollary~\ref{cor:fdp-opt-bw} for a $k^{\rm th}$ order kernel with
  bandwidth $h_m(k)=m^{-1/(2k+1)}\eta_m^2$ (where $\eta_m \to 0$ and
  $m \eta_m \to +\infty$ as $m \to +\infty$), then the corresponding
  plug-in procedure BH$(\alpha/\pih^k)$ converges in distribution at
  rate $m^{-k/(2k+1)}/\eta_m$ for any $\alpha$ greater than
  $\asPI=\ppdf(1) \as_{BH}$.  These results are summarized in the
  first row of Table~\ref{tab:location-models}.

\paragraph{Laplace.} The distribution of one-sided $p$-values in the one-sided Laplace model satisfies $\pcdfaos(t) = 1-(1-t)e^{-\theta}$ for $t \geq 1/2$, see Lemma~\ref{lm:laplace} in Appendix~\ref{sec:calc-spec-models}. Therefore, for $t\geq 1/2$,  $(1-\pcdfos(t))/(1-t)$ is constant, equal to $\ppdfos(1) = \pi_0 + (1-\pi_0)e^{-\theta}$, as illustrated by the solid curves in the right panels of Figure~\ref{fig:critic-subbotin}. Therefore, for any fixed $\lambda \geq 1/2$, the Storey-$\lambda$ estimator is an unbiased estimator of $\ppdf(1)$, which converges to $\ppdf(1)$ at rate $m^{-1/2}$. The same property holds for any kernel estimator of $\ppdf(1)$ with a fixed bandwidth.  
These results are summarized in the third row of Table~\ref{tab:location-models}.

\paragraph{Gauss.} In the Gaussian model however, the regularity of $\ppdfa$ near 1 is poor: we have
\begin{displaymath}
  \ppdfaos(t) = \exp \left(-\frac{\theta^2}{2}-\theta \Phi^{-1}(t)\right)\,,
\end{displaymath}
where $\Phi(=\tcdfn)$ denotes the standard Gaussian distribution function. As $h \to 0$, $\Phi^{-1}(1-h) \leq \sqrt{2\ln(1/h)}$, implying that
\begin{displaymath}
  \ppdfaos(1-h) \geq \exp \left(-\frac{\theta^2}{2}-\theta\sqrt{2\ln(1/h)}\right)\,.
\end{displaymath}
Therefore, $\ppdfaos$ is not differentiable at 1, and the convergence rates of the kernel estimators of $\pi_0$ studied in Section~\ref{sec:conv-rates-estim} are slower than $m^{-1/3}$ in our setting.  
These results are summarized in the second row of Table~\ref{tab:location-models}.

\begin{table}[!htp]
  \centering
  \begin{tabular}{l|c|c|c|c}
    &&& \multicolumn{2}{c}{Convergence rates} \\
    Model & $\lim_0 1/\ppdfa$ & $\ppdfa(1)$ & $\pihSto(1-h_m(k))$ & $\hat{\ppdf}_{m}^{k}(1)/\eta_m$ \\ \hline
    One-sided  Student & $s_\nu(\theta)$ & $s_\nu(-\theta)$ & $\ll m^{-k/(2k+1)}/\eta_m $ & $\ll m^{-k/(2k+1)}/\eta_m$ \\
    One-sided  Gaussian & 0 & 0 & $\gg m^{-1/3}$ & $\gg m^{-1/3}$ \\
    One-sided Laplace & $e^{-\theta}$ & $e^{-\theta}$  & $m^{-1/2}$ & $m^{-1/2}$ \\
    \hline
   Two-sided Student & $(s_\nu(\theta)+s_\nu(-\theta))/2$ & $e^{-\theta^2/2}$ & $m^{-2/5}/\eta_m$ & $\ll m^{-k/(2k+1)}/\eta_m$\\
    Two-sided Gaussian & 0 & $e^{-\theta^2/2}$ & $m^{-2/5}/\eta_m$ & $\ll m^{-k/(2k+1)}/\eta_m$\\
    Two-sided Laplace & $\cosh{\theta}$ & $e^{-\theta}$ & $m^{-1/3}/\eta_m$& $\ll m^{-k/(2k+1)}/\eta_m$\\
  \end{tabular}
  \caption{\emph{Properties of one- and two-sided test statistics distributions in Student, Gaussian, and Laplace models, and convergence rates of the kernel estimators studied. 
When the rate depends on $k$, the value of $k$ may be chosen arbitrarily large. 
$\eta_m$ is a sequence such that $\eta_m \to 0$ and $m\eta_m \to +\infty $ as $m \to +\infty$.}} 
  \label{tab:location-models}
\end{table}

\begin{figure}[!h]
  \centering
  \includegraphics[width=\columnwidth]{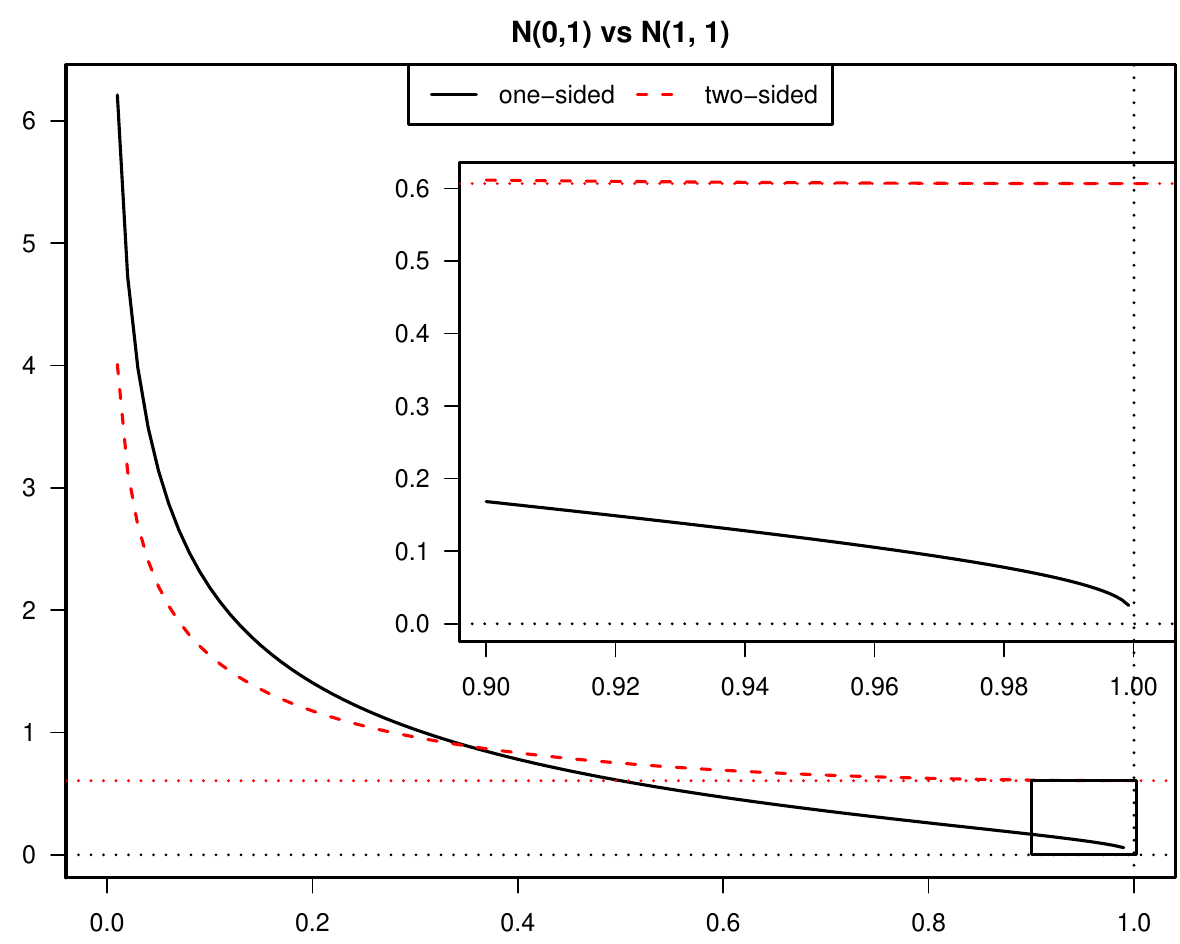}
  \caption{\emph{Density of one- and two-sided $p$-values under the alternative hypothesis for the location model $\mathcal{N}(0,1)$ versus $\mathcal{N}(1,1)$. Inserted plot: zoom in the region $[0.9, 1]$, which is highlighted by a black box in the main plot.}}
  \label{fig:gaussian}
\end{figure}

The difference between one- and two-sided tests in the Gaussian location model is illustrated by Figure \ref{fig:gaussian} for $\theta=1$, that is when testing $\mathcal{N}(0,1)$ against $\mathcal{N}(1,1)$.
The density of two-sided $p$-values has a positive limit at 1, and its derivative at 1 is 0, making it possible to estimate $\ppdf(1)=\pi_0+(1-\pi_0)e^{-\theta^2/2}$ at rate $m^{-2/5}$, by Corollary~\ref{cor:fdp-opt-bw}. Conversely, the density of one-sided $p$-values tends to 0 at 1, but is not differentiable: the true $\pi_0$ can be estimated consistently, but the convergence rate is slower.

\section{Concluding remarks}
\label{sec:discussion}
  This paper studies asymptotic properties of a family of plug-in
  procedures based on the BH procedure.  When compared to the BH procedure or to the Storey-$\lambda$ procedure, the results for general
  models obtained in Section~\ref{sec:conv-rate-cons} show that
  incorporating the proposed estimators of $\pi_0$ into the BH procedure 
  asymptotically yields (i) tighter FDR control (or, equivalently, greater power) and (ii) smaller
  critical values, thereby increasing the range of situations in which
  the resulting procedure has positive asymptotic power.  These improvements come at the price of a reduction in the
  convergence rate from the parametric rate $m^{-1/2}$ to a
  non-parametric rate $m^{-k/(2k+1)}$, where $k$ is connected to the order of differentiability of the test statistics distribution.  As the results obtained
  for the proposed modification of the Storey-$\lambda$ estimator $\pihSto(1-h_m)$ require stronger conditions (null derivatives
  of $\ppdfa$) than for kernel estimators with a kernel of order $k$,
  we conclude that it is generally better to use the latter class of
  estimators.

  Our application of these results to specific models for the test statistics sheds some light on the influence of the test statistics distribution on  convergence rates of plug-in procedures: 
  \begin{itemize}
  \item When the test statistics distribution is $C^\infty$ (e.g. for two-sided Gaussian test statistics, and for Laplace and Student tests statistics), the obtained convergence rates  are slower than the parametric rate, but may be arbitrarily close to it by choosing a kernel of sufficiently high order.
    The resulting estimators are not consistent estimators of $\pi_0$, although the bias decreases as the non-centrality parameter $\theta$ increases.
  \item When the regularity of the test statistics distribution is poor (such as in the one-sided Gaussian model), the convergence rate of the FDP achieved by the plug-in procedures studied in this paper is slower.  The plug-in procedures studied  are still asymptotically more powerful than the BH procedure or the Storey-$\lambda$ procedure, but the FDP actually achieved by that procedure may be far from the target FDR level.
  \end{itemize}
  Obtaining
    more precise conclusions in the context of a specific data set
    or application exceeds the scope of the present paper, as it  would require extending the obtained results to more realistic settings such as the ones that are now described.

\subsection{Extensions of the multiple testing setting considered}
\label{sec:extens-mult-test}
An interesting research direction would be to extend the multiple testing setting considered here to more realistic assumptions.  A typical example of application is the case of differential expression analyses in genomics, which aim at identifying those genes whose expression level differs between two known populations of samples.  
First, we have assumed that all null hypotheses are independent, and that all true alternative hypotheses follow the same distribution. The independence assumption is not realistic, as genes are known to interact with each other, in particular through transcriptional regulation networks.  Moreover, the level of differential expression needs not be the same for all genes under $\ha$.
For the results on criticality that have been used in this paper, the proof given in \cite{chi07on-the-performance} essentially relies on the assumption that the $p$-values are independently and identically distributed.  Therefore, it seems that these results could be extended to composite distributions under $\ha$, provided that the corresponding marginal distributions are still independently and identically distributed.  Extending these results to settings where the independence assumption is relaxed seems a more challenging question.  As for the convergence results established in Section~\ref{sec:conv-rate-cons}, their proofs rely on the formalism laid down by  \cite{neuvial08asymptotic}.  Therefore, these results could be extended  to other dependency assumptions, or to composite distributions under $\ha$ provided that the convergence in distribution of the empirical distribution functions $(\pecdfn, \pecdfa)$ holds under these assumptions.  In that spirit, the results of \cite{neuvial08asymptotic} have recently been extended to an equi-correlated Gaussian model~\citep{delattre11on-the-false} and to a more general Gaussian model where the covariance matrix is supposed to be close enough to the identity as the number of tests grows to infinity~\citep{delattre:asymptotics}.

Second, we have shown that the asymptotic properties of FDR controlling procedures are driven by the shape and regularity of the test statistics distribution.  In practice, the test statistics distribution depends on the size of the sample used to generate them.  In differential expression analyses, a natural test statistic is Student's $t$, whose distribution depends on sample size through both the number of degrees of freedom $\nu$  and a non-centrality parameter $\theta$. In the spirit of the results of \cite{chi07sample} on the influence of sample size on criticality, it would be interesting to study the convergence rates of plug-in procedures when both the sample size and the number of hypotheses tested grow to infinity.

\subsection{Alternative strategies to estimate $\pi_0$}
\label{sec:other-strat-estim}
The estimators of $\pi_0$ considered in this paper are kernel estimators of the density $\ppdf$ at 1.  Therefore, they achieve non-parametric convergence rates of the form $m^{-k/(2k+1)}/\eta_m$, where $k$ controls the regularity of $\ppdf$ near 1 and $\eta_m \to 0$ slowly enough.  An interesting open question is whether these non-parametric rates may be improved.  Other strategies for estimating $\pi_0$  may be considered to achieve faster convergence rates, including the following two:
\begin{itemize}
\item One-stage adaptive procedures as proposed by \cite{blanchard09adaptive} and \cite{finner09on-the-false} allow more powerful \fdr\ control than the standard BH procedure without explicitly incorporating an estimate of $\pi_0$: they are not plug-in procedures. 
\item \cite{jin08proportion} proposed an estimator of $\pi_0$ based on the Fourier transform of the empirical characteristic function of the $Z$-scores associated to the $p$-values.  This estimator does not focus on the behavior of the density near 1, and might not suffer from the same limitations as the estimators studied here. This estimator was shown to be consistent for the estimation of $\pi_0$ when the $Z$-scores follow a Gaussian location mixture, but no convergence rates were established.
\end{itemize}
  In a general semi-parametric framework where $\ppdfa$ is not
  necessarily decreasing, and its regularity is not specified,
  \cite{nguyen13efficient} have recently proved that if the Lebesgue
  measure of the set on which $\ppdfa$ achieves its
  minimum is null, then no consistent estimator of $\min_t g(t)$
  with a finite asymptotic variance can reach the parametric
  convergence rate $m^{-1/2}$.  In our setting where $\ppdfa$ is
  decreasing, the measure of the set on which $\ppdfa$ is minimum is
  indeed null, except if $\ppdfa$ is constant on an interval of the
  form $[t_0, 1]$.  For one-sided tests where $\ppdfa(t) = (\ilr)
  (\tcdfn^{-1}(1-t))$, this extreme situation arises if and only if
  the likelihood ratio is constant on an interval of the form $[x_0,
  +\infty)$.  Among all models studied in
  Section~\ref{sec:exampl-locat-models}, the only case in which this
  occurs is the one-sided Laplace model, where $\ilr(x) = \exp(\vert x
  \vert - \vert x-\theta \vert) = e^{\theta}$ for $x \geq \theta > 0$.
  The kernel estimators that we have studied here do reach the rate
  $m^{-1/2}$ in this case.

In the more common situation in which the measure of the set on which $\ppdfa$ vanishes (or achieves its minimum) is null, the above negative result of \cite{nguyen13efficient} suggests that there is little room for improving on the non-parametric convergence rates obtained in Propositions~\ref{prop:opt-bw-sto}  and~\ref{prop:opt-bw-k}. 
We conjecture that it is not possible for consistent estimators of $g(1)$ to reach a parametric convergence rate in this setting. 

\acks{
The author would like to thank Stéphane Boucheron for insightful advice and comments, and Catherine Matias and Etienne Roquain for useful discussions.
He is also grateful to anonymous referees for very constructive comments and suggestions that greatly helped improve the paper.

This work was partly supported by the association ``Courir pour la vie, courir pour Curie'', and by the French ANR project TAMIS.
}

\appendix                  
\section*{Appendix}
\addcontentsline{toc}{section}{Proofs}

\section{Calculations in specific models} 
\label{sec:calc-spec-models}

\subsection{Location models}
\label{sec:location-models}

Lemma~\ref{lm:laplace} gives the distribution of the $p$-value under the alternative hypothesis for one-sided tests in the Laplace model.  The proof is straightforward, so it is omitted.

\begin{lemma}[One-sided Laplace location model] \label{lm:laplace}
Assume that the probability distribution function of the test statistics is $\tpdfn:x \mapsto \frac{1}{2} \e{-|x|}$ under the null hypothesis, and $\tpdfa:x \mapsto \frac{1}{2} \e{-|x-\theta|}$ under the alternative, with $\theta>0$ (one-sided test). Then
\begin{enumerate}
\item The one-sided \emph{p}-value function is
  \begin{eqnarray*} 
    1-\tcdfn(x)=
   \begin{cases}
      \frac{1}{2} \e{(-|x|)} & \textrm{ if } x \geq 0\\
      1-\frac{1}{2} \e{(-|x|)} & \textrm{ if } x<0\\
   \end{cases}
  \end{eqnarray*}
\item The inverse one-sided \emph{p}-value function is
  \begin{eqnarray*} 
    \left(1-\tcdfn\right)^{-1}(t)=
   \begin{cases}
      \ln{\left(\frac{1}{2t}\right)} & \textrm{ if } 0 \leq t \leq \frac{1}{2}\\
      \ln{\left(2(1-t)\right)} & \textrm{ if } \frac{1}{2} < t < 1 \\
   \end{cases}
  \end{eqnarray*}
\item The cdf of one-sided \emph{p}-values under $\ha$ is 
  \begin{eqnarray*} 
    \pcdfaos(t)=
   \begin{cases}
      t \e{\theta} & \textrm{ if } 0 \leq t \leq  \frac{\e{-\theta}}{2}\\
      1-\frac{1}{4t} \e{-\theta}& \textrm{ if } \frac{\e{-\theta}}{2} \leq t \leq \frac{1}{2}\\
      1-(1-t) \e{-\theta} &   \textrm{ if } t \geq \frac{1}{2}\\
   \end{cases}
  \end{eqnarray*}
\item The probability distribution function of one-sided \emph{p}-values under $\ha$ is 
  \begin{eqnarray*}
    \ppdfaos(t)=
   \begin{cases} \label{expo-pdf}
      \e{\theta} & \textrm{ if } 0 \leq t \leq  \frac{\e{-\theta}}{2}\\
      \frac{1}{4t^2} \e{-\theta}& \textrm{ if } \frac{\e{-\theta}}{2} \leq t \leq \frac{1}{2}\\
      \e{-\theta} &   \textrm{ if } t \geq \frac{1}{2}\\
   \end{cases}
  \end{eqnarray*}
\end{enumerate}
\end{lemma}

\begin{proposition}[Concavity in two-sided $\gamma$-Subbotin models]
  \label{prop:concavity-two-sided-subbotin}
If the test statistics follow a $\gamma$-Subbotin distribution with $\gamma \in [1,2]$, then the distribution function of the two-sided $p$-values under the alternative $\pcdfa$ is concave.
\end{proposition}

\begin{proof}[Proof of Proposition~\ref{prop:concavity-two-sided-subbotin}]
\eqref{cond:symmetry} holds for Subbotin models. By Lemma~\ref{lm:concavity}, we need to prove that the likelihood ratio $\ilrg$ of the $\gamma$-Subbotin model with $\gamma$ is such that $h:x \mapsto(\ilrg)(x) + (\ilrg)(-x)$ is non-decreasing on $\mathbb{R}_+$. The function $h$ is differentiable on $(0,+\infty)\setminus \{\theta\}$, and its derivative is given by 
\begin{eqnarray*}
  \label{eq:h-prime}
  h'(x) &=& \left(\lrg \right) '(x)-\left(\lrg\right)'(-x) \,,
\end{eqnarray*}
where 
\begin{eqnarray}
  \label{eq:lrg-prime}
  \left(\lrg\right)'(y) & = & \left(\sgn(y) \vert y \vert ^{\gamma-1} - \sgn(y-\theta) \left\vert y-\theta \right\vert ^{\gamma-1}\right)\lrg(y)
\end{eqnarray}
for any $y \in \mathbb{R}\setminus \{0,\theta\}$.
Let $x>0$ such that $x \neq \theta$, we are going to prove that $h'(x) \geq 0$.
As $\ilrg$ is non-decreasing, both $\left(\ilrg\right)'(x)$ and $\left(\ilrg\right)'(-x)$ are non-negative.
If $\left(\ilrg\right)'(-x)=0$, then $h'(x) \geq 0$ as desired.  From now on, we assume that $\left(\ilrg\right)'(-x) > 0$.
As $\theta>0$, \eqref{eq:lrg-prime} entails that 
\begin{eqnarray}
\label{eq:lr-subbotin}
\frac{\left(\ilrg\right)'(x)}{\left(\ilrg\right)'(-x)} 
& = &  \frac{ x ^{\gamma-1} - \sgn(x-\theta) \left\vert x-\theta \right\vert ^{\gamma-1}}{\left( x + \theta \right) ^{\gamma-1}- x ^{\gamma-1}} \frac{  \tpdfa(x)^\gamma}{\tpdfa(-x)^\gamma} ,
\end{eqnarray}
where $\tpdfa(x)^\gamma > \tpdfa(-x)^\gamma$ because $-\vert x-\theta\vert + \vert x+\theta\vert >0$.  As $\left(\ilrg\right)'(-x) > 0$, it is enough to show that 
\begin{eqnarray}
\label{cond:lr-subbotin}
x ^{\gamma-1} - \sgn(x-\theta) \left\vert x-\theta \right\vert ^{\gamma-1} \geq \left( x+\theta \right) ^{\gamma-1}- x ^{\gamma-1}
\end{eqnarray}
in order to prove that $h'(x) \geq 0$.  
By the  concavity of $x \mapsto x^{\gamma-1}$ on $\mathbb{R}_+$ for $1\leq \gamma \leq 2$,  $\phi:x \mapsto \theta^{-1}(x^{\gamma-1} - (x-\theta)^{\gamma-1})$ is non-increasing  on $[\theta, +\infty]$.  Therefore, if $x > \theta$ we have $\phi(x) \geq \phi(x+\theta)$ and~\eqref{cond:lr-subbotin} holds.
If $x < \theta$, then noting that for any $a,b >0$ and $\zeta \in [0,1]$, $a^\zeta + b^\zeta \geq (a+b)^\zeta$, we have, for $1\leq \gamma \leq 2$, $x^{\gamma-1} + (\theta-x)^{\gamma-1} \geq \theta^{\gamma -1} \geq \left( x+\theta \right) ^{\gamma-1}- x ^{\gamma-1}$, and~\eqref{cond:lr-subbotin} holds as well.
\end{proof}
 
  \subsection{Student model}
  \label{sec:student-model}
  \begin{lemma}[Derivative of the Student likelihood ratio]
    \label{lm:lr-student-derivatives}
    Let $\nu\in \mathbb{N}^*$ and $\theta>0$.  The likelihood ratio
    $\ilr$ of the Student model with $\nu$ degrees of freedom and
    non-centrality parameter $\theta$ is $C^1$ 
    on $\mathbb{R}$, and for any
    $t \in \mathbb{R}$,
    \begin{align}
      \label{eq:lr-student-derivative}
      \left(\lr\right)'(t) =  \nu(\nu + t^2) ^{-3/2}
      \sum_{j=0}^{+\infty} a_j^{1}(\nu, \theta)\psi_{(j,\nu)}(t)\,,
    \end{align}
where $a_j^{1}(\nu, \theta)=(j+1)a_{j+1}(\nu, \theta)$ is such that $(\sum_j a_j^{1}(\nu,\theta))$  converges absolutely.
  \end{lemma}
\begin{proof}[Proof of Lemma~\ref{lm:lr-student-derivatives}]
As $(\sum_j  a_j(\nu,\theta))$ converges absolutely and as $\psi_{(j,\nu)}$ is
  differentiable on $\mathbb{R}$ for any $j \geq 0$ and bounded (by
  [-1,1]), the dominated convergence theorem ensures that $\ilr$ is
  differentiable on $\mathbb{R}$ and that its derivative is given by:
  \begin{align}
    \left(\lr\right)'(t) & = \sum_{j=1}^{+\infty} a_j(\nu, \theta)
    \psi'_{(j,\nu)}(t)\,.
  \end{align}
  For $t \neq 0$, we have $\log \left(\sgn(t)^j
    \psi_{(j,\nu)}(t)\right) = - j/2 \left(\log(1+\nu/t^2)\right)$,
  whose derivative is $j\nu/(\nu t + t^3)$, so that
  \begin{align}
    \psi'_{(j,\nu)}(t) & = \psi_{(j,\nu)}(t) \frac{j\nu}{t(\nu +
      t^2)}\,.
  \end{align}
  As $\psi_{(j,\nu)}(t) \underset{t \to 0}{\sim} (t/\sqrt{\nu})^j$, we have
  $\psi_{(j,\nu)}(0)=0$, $\psi'_{(j,\nu)}(0)=0$, and $\psi'_{(j,\nu)}$
  is continuous at 0.
  Equation~\eqref{eq:lr-student-derivative} follows by noting that $\psi_{(j+1,\nu)}(t)/\psi_{(j,\nu)}(t) =  t/\sqrt{t^2+\nu}$, and that $(\sum_j a_j^{1}(\nu,\theta))$ converges absolutely by Stirling's formula.
\end{proof}

Lemma~\ref{lm:lr-student-derivatives} entails the following result:
\begin{proposition}[Regularity of the Student likelihood ratio]
   \label{prop:lr-student-derivatives}
    Let $\nu\in \mathbb{N}^*$ and $\theta>0$.  The likelihood ratio
    $\ilr$ of the Student model with $\nu$ degrees of freedom and
    non-centrality parameter $\theta$ is has the following properties:
  \begin{enumerate}
  \item $\ilr$ is $C^\infty$ on $\mathbb{R}$;
  \item For any $k \in \mathbb{N}^*$, we
    have 
    $\left(\ilr\right)^{(k)}(t) \to 0$ as $\vert t \vert \to +\infty$;
  \item $(\ilr)^{(2)}(0) \neq 0$.
  \end{enumerate}
\end{proposition}

\begin{proof}[Proof of Proposition~\ref{prop:lr-student-derivatives}]
  \begin{enumerate}
  \item By \eqref{lm:lr-student-derivatives},  the function series in $(\ilr)'$  has the same form as $\ilr$; therefore, the result easily follows by induction.
  \item By \eqref{lm:lr-student-derivatives}, Leibniz formula entails that the successive derivatives of $\ilr$ are linear combinations of products of function series of the same form as $\ilr$ by derivatives of $t \mapsto (\nu + t^2) ^{-3/2}$.  The result follows by the dominated convergence theorem, as all the derivatives of $t \mapsto (\nu + t^2) ^{-3/2}$ tend to 0 as $\vert t \vert \to +\infty$;
  \item The result follows by differentiating \eqref{eq:lr-student-derivative} at 0.
 \end{enumerate}
\end{proof}

\begin{proposition}[Concavity in the two-sided Student model]
  \label{prop:concavity-two-sided-student}
  The distribution function $\pcdfa$ of two-sided $p$-values in the
  Student model satisfies~(\ref{cond:concavity}).
\end{proposition}

\begin{proof}[Proof of Proposition~\ref{prop:concavity-two-sided-student}]
  By Lemma~\ref{lm:concavity}, we need to prove that the likelihood
  ratio $\ilr$ of the Student model is such that $t \mapsto (\ilr)(t)
  + (\ilr)(-t)$ is non-decreasing.
  Equation~\eqref{eq:lr-student-derivative} yields for $t \in
  \mathbb{R}$
  \begin{eqnarray}
    \label{eq:sum-derivative-lr-student}
    \left(\lr\right)'(t) +  \left(\lr\right)'(-t)&  = &  \nu(\nu + t^2) ^{-3/2}~\sum_{j=0}^{+\infty} a_j^{1}(\nu, \theta)
    \left(\psi_{(j,\nu)}(t) - \psi_{(j,\nu)}(-t)\right)\,,
  \end{eqnarray}

  with $\psi_{(j,\nu)}(t) - \psi_{(j,\nu)}(-t) = (1-(-1)^j)
  (t/\sqrt{\nu+t^2})^{-j}$. Therefore, as $a_j^{1}(\nu, \theta)>0$, \eqref{eq:sum-derivative-lr-student} yields
  $(\ilr)'(t) + (\ilr)'(-t) \geq 0$, which concludes the proof.
\end{proof}

\section{Convergence rate of a kernel estimator based on Storey's estimator}
\label{sec:conv-rate-kernel}
\begin{proof}[Proof of Proposition~\ref{prop:asy-var-pi0h}]
  \begin{enumerate}
  \item 
 We demonstrate that $\pihSto(1-h_m)$ may be written as a sum of $m$ independent random variables that satisfy the Lindeberg-Feller conditions for the Central Limit Theorem \citep{pollard84csp}. 
Let $Z^m_i = \ind{P_i \geq 1-h_m}$, where the $P_i$ are the $p$-values. $Z^m_i$ follows a Bernoulli distribution with parameter $1- \pcdf(1-h_m)$. Letting
\begin{displaymath}
  Y^m_i = \frac{Z^m_i-\Exp{Z^m_i}}{\sqrt{m h_m}}\,,
\end{displaymath}
we have $\sum_{i=1}^m Y^m_i = \sqrt{m h_m} \left(\pihSto(1-h_m)-\Exp{\pihSto(1-h_m)}\right)$.
The $(Y^m_i)_{1\leq i \leq m}$ are centered, independent random variables, with $\Var Y^m_i=\Var Z^m_i/(m h_m) = \pcdf(1-h_m)(1-\pcdf(1-h_m))/(m h_m)$, which is equivalent to $\ppdf(1)/m$ as $m \to +\infty$. Therefore,
\begin{displaymath}
  \lim_{m \to +\infty}  \sum_{i=1}^m \Exp{(Y^m_i)^2} = \ppdf(1)\,. 
\end{displaymath}
Finally we prove that for any $\eps>0$, 
\begin{displaymath}
  \lim_{m \to +\infty}  \sum_{i=1}^m \Exp{(Y^m_i)^2 \ind{\vert Y^m_i\vert > \eps}} = 0\,. 
\end{displaymath}
As $Z^m_i \in \{0,1\}$ and $\Exp{Z^m_i} \in [0,1]$, we have $(Y^m_i)^2\leq 1/(m h_m)$, and 
\begin{eqnarray*}
\sum_{i=1}^m \Exp{(Y^m_i)^2 \ind{\vert Y^m_i\vert > \eps}} & \leq & \frac{1}{h_m} \Exp{\ind{\vert Y^m_1\vert > \eps}}\\
& = & \frac{1}{h_m} \Prob{\left(\vert Y^m_1\vert > \eps\right)}\\
& \leq & \frac{1}{h_m} \frac{\Var Y^m_1}{\eps^2}
\end{eqnarray*}
by Chebycheff's inequality. As  $m h_m \to +\infty$ and $\Var Y^m_1 \sim \ppdf(1)/m$ as $m \to +\infty$, the above sum therefore goes to $0$ as $m h_m \to +\infty$. The Lindeberg-Feller conditions for the Central Limit Theorem are thus fulfilled, and we have 
\begin{displaymath}
    \sum_{i=1}^m Y^m_i \rightsquigarrow \mathcal{N}(0, \ppdf(1))\,,
\end{displaymath}
which concludes the proof.
\item As $\pcdf(\lambda) = \pi_0 \lambda + (1-\pi_0) \pcdfa(\lambda)$, we have, for any $\lambda <1$,
\begin{eqnarray}
  \label{eq:storey-alt}
  \frac{1-\pcdf(\lambda)}{1-\lambda} = \pi_0 + (1-\pi_0)\frac{1-\pcdfa(\lambda)}{1-\lambda}\,.
\end{eqnarray}
Therefore, the bias is given by
\begin{displaymath}
  \Exp{\pihl} - \pi_0 =  (1-\pi_0)\frac{1-\pcdfa(\lambda)}{1-\lambda}\,.
  \end{displaymath}

A Taylor expansion as $\lambda \to 1$ yields
  \begin{eqnarray*}
    1-\pcdfa(\lambda)  & = & \sum_{l=0}^{k}\frac{(-1)^{l}\ppdfa^{(l)}(1)}{(l+1)!}(1-\lambda)^{l+1} + \po{(1-\lambda)^{l+1}}\\
& = & (1-\lambda) \ppdfa(1) + \frac{(-1)^{k}\ppdfa^{(k)}(1)}{(k+1)!}(1-\lambda)^{k+1} + \po{(1-\lambda)^{k+1}}
  \end{eqnarray*}
as $\ppdfa^{(l)}(1) = (1-\pi_0)^{-1}\ppdf^{(l)}(1) = 0$ for $1\leq l < k$.  Therefore, if $h_m \to 0$ as $m \to +\infty$, we have
 \begin{displaymath}
    \Exp{\pihSto(1-h_m)} - \ppdf(1) = (1-\pi_0) \frac{(-1)^{k} \ppdfa^{(k)}(1)}{(k+1)!}h_m^k + \po{h_m^k}\,,
  \end{displaymath}
\end{enumerate}
which concludes the proof, as $(1-\pi_0)\ppdfa^{(k)}(1)=\ppdf^{(k)}(1)$. 
\end{proof}

\begin{proof}[Proof of Proposition~\ref{prop:opt-bw-sto}]
By Proposition~\ref{prop:asy-var-pi0h}, the asymptotic variance of $\pih(1-h_m)$ is equivalent to $\ppdf(1)/(m h_m)$, and the bias is of order $h_m^k$.
  The optimal bandwidth is obtained for $h_m$ proportional to $m^{-1/(2k+1)}$, because this choice balances variance and squared bias. The proportionality constant is an explicit function of $k$, $\pi_0$, $\ppdfa(1)$, and $\ppdfa^{(k)}(1)$. 
  By definition, the MSE that corresponds to this optimal choice is twice the corresponding squared bias, i.e. of order $m^{-2k/(2k+1)}$, which completes the proof of (1).
To prove (2), we note that 
\begin{displaymath}
    \sqrt{m h_m} \left(\pih - \ppdf(1)\right) =   \sqrt{m h_m} \left( \pih - \Exp{\pih}\right) + \sqrt{m h_m}\left( \Exp{\pih} - \ppdf(1)\right)\,,
\end{displaymath}
where $\pih$ denotes $\pih(1-h_m)$ to alleviate notation. The first term (variance) converges in distribution to $\mathcal{N}(0, \ppdf(1))$ by Proposition~\ref{prop:asy-var-pi0h} (1) as soon as $\sqrt{m h_m} \to +\infty$. The second term (bias) is of the order of $\sqrt{m h_m} h_m^k = \sqrt{m h_m^{2k+1}}$ by Proposition~\ref{prop:asy-var-pi0h} (2). Taking $h_m(k)=h_m ^\star (k) \eta^2_m$, where $\eta_m \to 0$, we have $m h_m^{2k+1} \to 0$, which ensures that the bias term converges in probability to 0.
\end{proof}

\section{Extension of Neuvial (2008) to the unconditional setting}
\label{sec:extens-uncond-setting}
In this section, we show that the results obtained by \cite{neuvial08asymptotic} in the original (conditional) setting of \cite{benjamini95controlling} also hold in the unconditional setting considered here, at the price of an additional term in the asymptotic variance due to the fluctuations of the random variable $\pi_{0,m}$.  We start by stating a lemma which provides a lower bound on the critical value of plug-in procedures.  It is is a consequence of Proposition~\ref{prop:criticality-BH}(1).

\begin{lemma} \label{lm:critic-plug-in}
Let $\alpha_m$ be a sequence of (possibly data-dependent) levels that converges in probability to $\alpha_{\infty} \in (0,1)$ as $m \to +\infty$.  
If $\alpha_\infty < \as_{BH}$, then the threshold $\tBH(\alpha_m)$ of the BH$(\alpha_m)$ procedure converges in probability to 0 as $m \to +\infty$.
If the convergence of $\alpha_m$ to $\alpha_\infty$ holds almost surely, then the convergence of $\tBH(\alpha_m)$ to 0 holds almost surely as well. 
\end{lemma}

\begin{proof}[Proof of Lemma~\ref{lm:critic-plug-in}]
  Assume that $\alpha_m$ converges to $\alpha_\infty$ in
  probability, with $\alpha_\infty < \as_{BH}$. Let $\eps > 0$, we are
  going to show that there exists an integer $N>0$ such that for a
  large enough $m$, the number of rejections of the BH$(\alpha_m)$
  procedure is less than $N$ with probability greater than $1-\eps$.
  Let $\bar{\alpha} = (\alpha_\infty + \as_{BH})/2$.  As $\alpha_m
  \overset{P}{\to} \alpha_\infty < \bar{\alpha}$, there exists an
  integer $M$ such that for any $m \geq M$, $\alpha_m \leq
  \bar{\alpha}$ with probability greater than $1-\eps/2$.  As
  $\bar{\alpha} < \as_{BH}$, Proposition~\ref{prop:criticality-BH}(1)
  entails that the number of rejections by the BH$(\bar{\alpha})$
  procedure is bounded in probability as $m \to +\infty$; that is,
  there exist two integers $N$ and $M'$ such that for $m \geq M'$,
  the number of rejections of the BH$(\bar{\alpha})$ procedure is
  less than $N$ with probability greater that $1-\eps/2$.  Thus, for
  any $m\geq \max(M,M')$, the number of rejections of the
  BH$(\alpha_m)$ procedure is less than $N$ with probability greater
  that $1-\eps$.
The proof for the almost sure convergence in the case when $\alpha_m$ converges to $\alpha_\infty$ almost surely is similar.
\end{proof}

We follow the proof technique introduced by \cite{neuvial08asymptotic}, by writing the empirical threshold of a given FDR controlling procedure (and its associated FDP) as the result of the application of a \emph{threshold function} of the empirical distribution of the observed $p$-values. As the regularity of the threshold functions involved has already been established by \cite{neuvial08asymptotic}, the result is a consequence of the fact that the $p$-value distributions under the null and the alternative hypotheses (as defined below) satisfy Donsker's theorem in the current unconditional setting.   This Donsker's theorem has been established by~\cite{genovese04a-stochastic}.
For $a \in \{0,1\}$ and $t \in [0,1]$, we let $\pecdfxU{a}(t) = m^{-1} \sum_{i=1}^m \ind{H_a\textrm{ true and } P_i\leq t}$.
\begin{proposition}[\cite{genovese04a-stochastic}, Theorem 4.1] As $m \to +\infty$, we have:
\label{prop:gw04}
\begin{enumerate}
\item 
  \begin{equation}
    \label{eq:gw04}
    \sqrt{m} \left(
      \begin{pmatrix}
        \pecdfnU(t) \\ \pecdfaU(t)
      \end{pmatrix} -
      \begin{pmatrix}
        \pi_0t \\ (1-\pi_0)\ppdfa(t)
      \end{pmatrix}
\right) \rightsquigarrow
\begin{pmatrix}
  \mathbb{W}_0 \\ \mathbb{W}_1
\end{pmatrix}
\,,
\end{equation}
where $\left(\mathbb{W}_0, \mathbb{W}_1\right)$ is a two-dimensional, centered Gaussian process with covariance function $\gamma(s,t)$ defined for any $(s,t) \in [0,1]^2$ by
\begin{equation}
  \label{eq:gw04-cov}
\gamma(s,t) = 
  \begin{pmatrix}
    \pi_0 s \wedge t - \pi_0^2 s t & -\pi_0 s (1-\pi_0) \pcdfa(t)\\
    -\pi_0 t (1-\pi_0) \pcdfa(s)  & (1-\pi_0) \pcdfa(s \wedge t) - (1-\pi_0)^2 \pcdfa(s) \pcdfa(t)
  \end{pmatrix}
\end{equation}
\item 
  \begin{equation}
\label{eq:gw04-G}
\sqrt{m}\left(\pecdf - \pcdf\right)  \rightsquigarrow \mathbb{W}\,,
\end{equation}
where $\mathbb{W} \overset{(d)}{=}\mathbb{W}_0+\mathbb{W}_1$ is a one-dimensional, centered Gaussian process with covariance function $(s,t) \mapsto \pcdf(s \wedge t) - \pcdf(s) \pcdf(t)$.
\end{enumerate}
\end{proposition}

Note that $\pecdfnU = \pi_{0,m} \pecdfn$ and $\pecdfaU = (1-\pi_{0,m}) \pecdfa$, where $(\pecdfn, \pecdfa)$ are the empirical distribution functions of the $p$-values under $\hn$ and $\ha$, respectively.  The results of \cite{neuvial08asymptotic} have been obtained by directly considering the convergence of the process  $(\pecdfn, \pecdfa)$ instead of $(\pecdfnU, \pecdfaU)$, because $\pi_{0,m} $ was deterministic in the conditional setting (see \citet[Theorem 3.1]{neuvial09corrigendum}).  The results established in \citet{neuvial08asymptotic} (in particular Theorem 3.2) can be translated to the unconditional setting just by replacing the processes $\pi_0 \mathbb{Z}_0$ and $\pi_1 \mathbb{Z}_1$ in \citet{neuvial08asymptotic} by the processes $\mathbb{W}_0$ and $\mathbb{W}_1$ defined in Proposition~\ref{prop:gw04}, and consequently, the process $\mathbb{Z} = \pi_0 \mathbb{Z}_0 + \pi_1 \mathbb{Z}_1$ by $\mathbb{W} = \mathbb{W}_0 + \mathbb{W}_1$.

Therefore, the asymptotic properties of the BH procedure and Storey's procedure (i.e. BH$(\cdot/\pihSto(\lambda))$ in the unconditional setting can be obtained by adapting the proof of the corresponding theorems (Theorems 4.2 and 4.15) in  \citet{neuvial08asymptotic}:

\begin{corollary}[Asymptotic properties of the BH procedure in the unconditional setting]
\label{cor:asy-prop-BH}
  For any $\alpha\geq \as_{BH}$, we have
  \begin{enumerate}
  \item The asymptotic distribution of the threshold $\tBH(\alpha)$ is given by 
    \begin{align}
    \label{eq:t-bh}
    \sqrt{m}\left(\tBH(\alpha)-\tBHinf(\alpha)\right) &\rightsquigarrow \mathcal{N}\left(0, \frac{\pcdf(\tBHinf(\alpha))(1-\pcdf(\tBHinf(\alpha)))}{(1/\alpha-\ppdf(\tBHinf(\alpha)))^2} \right)
  \end{align}
  \item The asymptotic distribution of the associated FDPs is given by
  \begin{align}
    \label{eq:fdp-bh}
    \sqrt{m}\left(\FDP_m(\tBH(\alpha))-\pi_0\alpha\right) &\rightsquigarrow \mathcal{N}\left(0,(\pi_0 \alpha)^2 \left(\frac{1}{\pi_0\tBHinf(\alpha)}-1\right)\right)    
  \end{align}
    \end{enumerate}
\end{corollary}
The asymptotic properties of the BH Oracle procedure are simply obtained by applying Corollary \ref{cor:asy-prop-BH} at level $\alpha/\pi_0$. 

\begin{corollary}[Asymptotic properties of Storey's procedure in the unconditional model]
\label{cor:asy-prop-Sto}
  For any $\lambda \in [0,1)$, and $\alpha \in [0,1]$, let $\tSto(\alpha) = \mathcal{T}^{\mathrm{Sto}(\lambda)}(\pecdf)$ be the empirical threshold $\tSto(\alpha)$ of Storey's procedure at level $\alpha$, and $\tStoinf(\alpha) = \mathcal{T}^{\mathrm{Sto}(\lambda)}(\pcdf)$ be the corresponding asymptotic threshold.
  Then,  
  \begin{enumerate}
  \item $\as_{\textnormal{Sto}(\lambda)} = \pibSto \as_{BH}$ is the critical value of Storey's procedure;
  \item For any $\alpha > \as_{\textnormal{Sto}(\lambda)} $:
  \begin{enumerate}
  \item The asymptotic distribution of the threshold $\tSto(\alpha)$ is given by 
\begin{align}
      \label{eq:t-bh} 
      \sqrt{m}\left(\tSto(\alpha)-\tStoinf(\alpha)\right) &\rightsquigarrow  \frac{\tStoinf(\alpha)}{\pibSto/\alpha - \ppdf(\tStoinf(\alpha))} \left\{
      \frac{\mathbb{W}(\tStoinf(\alpha))}{\tStoinf(\alpha)} + \frac{1}{\alpha} \frac{\mathbb{W}(\lambda)}{1-\lambda}
\right\}\,,
    \end{align}
    where
    $\mathbb{W}$ is a centered Gaussian process with covariance function
    $(s,t) \mapsto \pcdf(s \wedge t) - \pcdf(s) \pcdf(t)$;
  \item The asymptotic distribution of the associated FDPs is given by
    \begin{align}
      \label{eq:fdp-bh}
      \sqrt{m}\left(\FDP_m(\tSto(\alpha))-\pi_0\alpha/\pibSto\right) &\rightsquigarrow \mathcal{N}\left(0, \sigma_\lambda^2\right) \,,
    \end{align}
where
    \begin{displaymath}
      \sigma_\lambda^2 = \left(\frac{\pi_0\alpha}{\pibSto}\right)^2 \left\{
        \frac{1}{\pi_0\tStoinf(\alpha)} + 2\frac{\tStoinf(\alpha) \wedge \lambda}{\tStoinf(\alpha)(1-\pcdf(\lambda))} - \frac{1}{1-\pcdf(\lambda)} 
\right\}
    \end{displaymath}
  \end{enumerate}
  \end{enumerate}
\end{corollary}
Note that Corollary~\ref{cor:asy-prop-Sto} with $\lambda=0$ recovers Corollary~\ref{cor:asy-prop-BH}.

\section{Asymptotic properties of plug-in procedures}
\label{sec:asympt-prop-plug-in}

\subsection{Proof of Theorem~\ref{thm:asy-prop-plug-in}}
We denote by $\rPI(\alpha)$  the proportion of rejections, and by $\vPI(\alpha)$ the proportion of incorrect rejections by the plug-in procedure BH$(\alpha/\pih)$ (among all $m$ hypotheses tested).  They may be written as $\rPI(\alpha)=\pecdf(\tPI(\alpha))= \tPI(\alpha)\pih/\alpha$ and $\vPI(\alpha)=\pi_{0,m} \pecdfn(\tPI(\alpha))$, respectively.  
The following Lemma shows that the convergence rate of ($\tPI(\alpha), \vPI(\alpha), \rPI(\alpha))$ for a large enough $\alpha$ is driven by the convergence rate of $\pih$.  In order to alleviate notation, we omit the ``$(\alpha)$'' in $\tPI$, $\rPI$, $\vPI$, $\tPIinf$, $\rPIinf$, $\vPIinf$ in the remainder of this section.  Moreover,  $\fdp_m(\tPI(\alpha))$ will simply be denoted by $\fdpPI$.

\begin{lemma}
\label{lem:t-v-r-plug-in} Let $\pih$ be an estimator of $\pi_0$ such that $\pih \to \pi_{0,\infty}$ in probability as $m \to +\infty$.  Define $\asPI=\pi_{0,\infty}\as_{BH}$, and let $\alpha>\asPI$.  Then, under~\eqref{cond:concavity}, we have, as $m \to +\infty$:
\begin{enumerate}
\item $\tPI$ converges in probability to $\tPIinf$ as $m \to +\infty$, with $\ppdf(\tPIinf) < \pi_{0,\infty}/\alpha$.  If the convergence of $\pih$ to $\pi_{0,\infty}$ holds almost surely, then that of $\tPI$ to $\tPIinf$ holds almost surely as well;
\item Further assume that $\sqrt{m h_m} \left(\pih - \pi_{0,\infty}\right)$ converges in distribution for some $h_m$ such that $h_m= \po{1/\ln\ln m}$ and $mh_m \to +\infty$ as $m \to +\infty$.  Then $(\tPI, \vPI, \rPI)$ converges at in distribution at rate $1/\sqrt{m h_m}$, with 
  \begin{displaymath}
    \begin{pmatrix}
     \tPI\\ \vPI\\\rPI
    \end{pmatrix}
-     \begin{pmatrix}
     \tPIinf\\ \vPIinf\\ \rPIinf
    \end{pmatrix}
    = \frac{\tPIinf/\alpha}{\pi_{0,\infty}/\alpha-\ppdf(\tPIinf)}
    \begin{pmatrix}
      1 \\ \pi_0 \\ \ppdf(\tPIinf)
    \end{pmatrix}
    (\pi_{0,\infty}-\pih)(1+\pop{1}) \,,
  \end{displaymath}
where $\vPIinf= \pi_0\tPIinf$ and $\rPIinf= \pcdf(\tPIinf) = \pi_{0,\infty}\tPIinf/\alpha$.
\end{enumerate}
\end{lemma}

\begin{proof}[Proof of Lemma~\ref{lem:t-v-r-plug-in}]
For 1., we assume that the convergence of $\pih$ to $\pi_{0,\infty}$ holds in probability.  If it also holds almost surely, then the convergence of $\tPI$ to $\tPIinf$ is almost sure as well.  The sketch of the proof is inspired by \citet[Lemma 21.3]{vaart98asymptotic}.
  Let $\psi_{F, \zeta}:t\mapsto t/\zeta - F(t)$ for any distribution function $F$ and any $\zeta \in (0,1]$. As $\pecdf(\tPI)=\pih \tPI/\alpha$ and $\pcdf(\tPIinf)=\pi_{0,\infty} \tPIinf/\alpha$, we have $\psi_{\pcdf, \alpha/\pi_{0,\infty}}(\tPIinf)=0$ and $\psi_{\pecdf, \alpha/\pih}(\tPI)=0$. The proof relies on the following property:
\begin{description}
\item[(a)] $\psi_{\pcdf, \alpha/\pi_{0,\infty}}(\tPI)$ converges in probability to $0=\psi_{\pcdf, \alpha/\pi_{0,\infty}}(\tPIinf)$;
\item[(b)] $\psi_{\pcdf, \alpha/\pi_{0,\infty}}$ is locally invertible in a neighborhood of $\tPIinf$, with  $\dot{\psi}_{\pcdf, \alpha/\pi_{0,\infty}}(\tPIinf)>0$.
\end{description}
To prove $(a)$, we note that
\begin{eqnarray*}
  -\psi_{\pcdf,\alpha/\pi_{0,\infty}}(\tPI) & = & \pcdf(\tPI)-\pi_{0,\infty}\tPI/\alpha \\
  & = & (\pcdf-\pecdf)(\tPI) + (\pecdf(\tPI)-\pih \tPI/\alpha)+ (\pih-\pi_{0,\infty})\tPI/\alpha\,.
\end{eqnarray*}
The first term converges to $0$ almost surely, the second one is identically null, and the third one converges in probability to $0$ as $\pih$ converges in probability to $\pi_{0,\infty}$, and $\tPI \in [0,1]$. Item (b) holds as $\pcdf$ in concave (by~\eqref{cond:concavity}) and $\alpha / \pi_{0,\infty} > \as_{BH}$, where $\as_{BH}=\lim_{u \to 0}u/\pcdf(u)$ is the critical value of the BH procedure (see \citet[Lemma 7.6 page 1097]{neuvial08asymptotic} for a proof of the invertibility). 
\begin{enumerate}
\item  Combining $(a)$ and $(b)$, $\tPI$ converges in probability to $\tPIinf$, and $\dot{\psi}_{\pcdf, \alpha/\pi_{0,\infty}}(\tPIinf) = \pi_{0,\infty}/\alpha - \ppdf(\tPIinf) $  is positive.
\item We only give the proof for $\tPI$, as the proofs for $\vPI$ and $\rPI$ are similar.
The idea of the proof is that the fluctuations of $\prcdf=\pecdf-\pcdf$, the centered empirical process associated with $\pcdf$, are of order $1/\sqrt{m}$ by Donsker's theorem~\citep{donsker51an-invariance}; thus, these fluctuations are negligible with respect to the fluctuations of $\pih-\pi_{0,\infty}$, which are assumed to be of order $1/\sqrt{m h_m}$ with $h_m \to 0$. We have 
\begin{eqnarray*}
  \pcdf(\tPI) - \pcdf(\tPIinf) & = & (\pcdf(\tPI)-\pecdf(\tPI)) + (\pecdf(\tPI) - \pcdf(\tPIinf))\\
  & = & -\prcdf(\tPI) + (\pih \tPI/\alpha - \pi_{0,\infty} \tPIinf/\alpha)
\end{eqnarray*}
because $\pecdf(\tPI)=\pih \tPI/\alpha$ and $\pcdf(\tPIinf)=\pi_{0,\infty} \tPIinf/\alpha$. Therefore,
\begin{displaymath}
  \pcdf(\tPI) - \pcdf(\tPIinf)  =  -\prcdf(\tPI) + \frac{\pih}{\alpha}(\tPI-\tPIinf)+ \frac{\pih-\pi_{0,\infty}}{\alpha}\tPIinf\,.
\end{displaymath}  \label{the-proof}
As $\tPI \overset{P}{\to} \tPIinf$ as $m \to +\infty$, we also have $\pcdf(\tPI) - \pcdf(\tPIinf) = (\tPI-\tPIinf)(\ppdf(\tPIinf)+\pop{1})$ by Taylor's formula. Hence we have

\begin{displaymath}
 \left (\ppdf(\tPIinf)-\pih/\alpha + \pop{1}\right)(\tPI-\tPIinf)  =  -\prcdf(\tPI) + (\pih-\pi_{0,\infty})\tPIinf/\alpha\,.
  \end{displaymath}
  Now because $\pih$ converges in probability to $\pi_{0,\infty}$, we have $\ppdf(\tPIinf)-\pih/\alpha = (\ppdf(\tPIinf)-\pi_{0,\infty}/\alpha)(1+ \pop{1})$.  By 1, we have $\pi_{0,\infty}/\alpha > \ppdf(\tPIinf)$, so that for sufficiently large $m$:
\begin{displaymath}
 \tPI-\tPIinf  =  \frac{\prcdf(\tPI)}{\ppdf(\tPIinf)-\pi_{0,\infty}/\alpha} (1+\pop{1})+ \frac{\tPIinf/\alpha}{\ppdf(\tPIinf)-\pi_{0,\infty}/\alpha}(\pih-\pi_{0,\infty})\,.
  \end{displaymath}
Finally, we note that as $\Vert \prcdf \Vert_\infty \sim c\sqrt{\ln\ln m/m}$ (by the Law of the Iterated Logarithm) and $h_m = \po{1/\ln\ln m}$, we have $\prcdf(\tPI) = \pop{1/\sqrt{m h_m}}$.  On the other hand,  $\sqrt{m h_m} \left(\pih - \pi_{0,\infty}\right)$ converges in distribution, so that the term $ (\pih-\pi_{0,\infty})\tPIinf/\alpha$ dominates the right-hand side.  Finally, we have
  \begin{displaymath}
    \tPI - \tPIinf   = \frac{\tPIinf/\alpha}{\ppdf(\tPIinf)-\pi_{0,\infty}/\alpha} (\pih-\pi_{0,\infty})(1+\pop{1}) \,,
  \end{displaymath}
which concludes the proof for $\tPI$. 
  \end{enumerate}\vspace{-4ex}
\end{proof}

\begin{proof}[Proof of Theorem~\ref{thm:asy-prop-plug-in}]
1. is a consequence of Lemma~\ref{lm:critic-plug-in} combined with Lemma~\ref{lem:t-v-r-plug-in}(1); 2.(a) is a consequence of Lemma~\ref{lm:critic-plug-in}(2). 
Let us prove 2.(b). By Lemma~\ref{lem:t-v-r-plug-in}, we have
    \begin{equation}
      \label{eq:t-v-r-plug-in}
      \sqrt{m h_m} \left(
        \begin{pmatrix}
          \vPI\\ \rPI
        \end{pmatrix}
        - 
        \begin{pmatrix}
          \vPIinf \\ \rPIinf
        \end{pmatrix} \right) \rightsquigarrow \xi_{\infty}
      \begin{pmatrix}
        \pi_0 \\ \ppdf(\tPIinf)
      \end{pmatrix}
      X \,, 
    \end{equation}
    where $X \sim \mathcal{N}(0, s_0^2)$ and \begin{displaymath}
      \xi_\infty = \frac{\tPIinf
        /\alpha}{\pi_{0,\infty}/\alpha-\ppdf(\tPIinf)} \,.
    \end{displaymath}
    Recall that $\fdpPI = \vPI/(\rPI \vee m^{-1})$.  We begin by
    noting that for a large enough $m$, we have $\rPI>1/m$ almost
    surely.  This is a consequence of the fact that (i) $\rPI =
    \pecdf(\tPI) = \pih \tPI/\alpha$, with $\tPI$ bounded away from 0
    (by 1.), and (ii) $\pih$ converges to $\pi_{0,\infty} \geq \pi_0 >
    \alpha$.
    As a consequence, the factor $ m^{-1}$ may be omitted in $\fdpPI$
    for a large enough $m$;
    the FDP may then be written as $\fdpPI=\gamma(\vPI,\rPI)$, where
    $\gamma:(u, v) \mapsto u/v$ for any $u\geq 0$ and $v >
    0$. $\gamma$ is differentiable for any such $(u,v)$, with
    derivative $\dot{\gamma}_{u,v} = (1/v, -u/v^2) = 1/v (1, -u/v)$.
    In particular, recalling that $\vPIinf= \pi_0\tPIinf$ and $\rPIinf
    = \pcdf(\tPIinf) = \pi_{0,\infty}\tPIinf/\alpha$, we have
    \begin{equation}\label{eq:dot-gamma}
      \dot{\gamma}_{\vPIinf, \rPIinf} = \frac{\alpha}{\tPIinf \pi_{0,\infty}} \left(1, -\frac{\pi_0 \alpha}{\pi_{0,\infty}}\right)\,.
    \end{equation}
    As $\gamma(\vPIinf, \rPIinf)=\pi_0\alpha/\pi_{0,\infty}$, the
    Delta method yields
    \begin{displaymath}
      \sqrt{mh_m}\left(\fdpPI-\frac{\pi_0\alpha}{\pi_{0,\infty}}\right) \rightsquigarrow \mathcal{N}\left(0, w^2\right)\,,
    \end{displaymath} 
    with $w = s_0 \xi_\infty~\dot{\gamma}_{\vPIinf, \rPIinf}
    \begin{pmatrix}
      \pi_0\\ \ppdf(\tPIinf)
    \end{pmatrix}
    $.

    By (\ref{eq:dot-gamma}), we have $ \dot{\gamma}_{\vPIinf, \rPIinf}
    \begin{pmatrix}
      \pi_0\\ \ppdf(\tPIinf)
    \end{pmatrix}
    = \frac{\alpha^2 \pi_0}{\tPIinf \pi_{0,\infty}^2}
    (\pi_{0,\infty}/\alpha - \ppdf(\tPIinf))$, so that $w = s_0
    \pi_0\alpha/\pi_{0, \infty}^2 $.
\end{proof}

\subsection{Consistency, purity and criticality}
\label{sec:consistency-purity-criticality}
\begin{proof}[Proof of Lemma~\ref{lm:purity-critic}]
We note that
\begin{eqnarray*}
\frac{\tpdfa(x)}{\tpdfn(x)} & = & \frac{\tpdfn(x-\theta)}{\tpdfn(x)}  \textrm{ by definition of a location model} \\
&  = & \frac{\tpdfn(-x+\theta)}{\tpdfn(-x)}  \textrm{ by~\eqref{cond:symmetry}}\\
& = & \frac{\tpdfn(-x+\theta)}{\tpdfa(-x+\theta)}\,,\ \ 
\end{eqnarray*}
which concludes the proof, as $\theta$ is a fixed scalar.
\end{proof}

\begin{proof}[Proof of Proposition~\ref{prop:purity-critic}]
We have $\as_{BH} = \lim_{t \to 0} 1/\ppdf(t)$, where $\ppdf = \pi_0 + (1-\pi_0)\ppdfa$ and 
\begin{displaymath}
      \ppdfa(t) = \lr\left(-\tcdfn^{-1}(t)\right)\,.
\end{displaymath}
Therefore, as $\lim_{t \to 0} \tcdfn^{-1}(t) = +\infty$, the result is a consequence of Lemma~\ref{lm:purity-critic}.
\end{proof}

\subsection{Regularity of $\ppdfa$ for two-sided tests in symmetric models}
\label{sec:regularity-two-sided}\begin{proof}[Proof of Lemma~\ref{lm:regularity-two-sided-p}]
  \begin{enumerate}
  \item We  make the additional assumption that there exists $\eta > 0$ such that $\ilr$ is differentiable on $V_\eta=[-\eta, \eta] \setminus \{0\}$, and that its derivative tends to $\ell_-$ as $u \to 0^-$ and  $\ell_+$ as $u \to 0^+$.  This assumption makes the proof simpler, and it holds in the models considered in this paper. However, the result still holds (and is simpler to state) without this extra assumption.
  By Proposition~\ref{prop:p-values}, we have under~\eqref{cond:symmetry}
  \begin{displaymath}
      \ppdfa(t) = \frac{1}{2}\left(\lr (q_0(t/2)) + \lr (-q_0(t/2)) \right)\,,
    \end{displaymath}
    where $q_0(t/2) = \tcdfn^{-1}(1-t/2)$ maps  $Q_\eta = [2(1-\tcdfn(\eta)), 1)$ onto  $(0, \eta]$. Therefore, $\ppdfats$ is differentiable on $Q_\eta$ and satisfies, for any $t$ in $Q_\eta$:
    \begin{eqnarray}
      \ppdfa^{ (1)}(t)  & = & \frac{1}{2} \left\{ \left(\lr \right)' (q_0(t/2)) - \left(\lr \right)' (-q_0(t/2)) \right\} 
      \times \frac{1}{2} q_0'(t/2) \notag \\
 & = & -\frac{1}{4\tpdfn(q_0(t/2))} \left(
    \left(\lr\right)' (q_0(t/2)) 
    - \left(\lr\right)' (-q_0(t/2))
  \right) \label{eq:ppdfa-derivative}
   \end{eqnarray}
As $t \to 1$, $q_0(t/2) \to 0^+$,~(\ref{eq:ppdfa-derivative}) implies that $\ppdfa$ is differentiable at 1 with derivative $-(4\tpdfn(0))^{-1}(\ell_+-\ell_-)$.
\item Similarly, we  prove the result with the extra assumption that $\ilr$ is twice differentiable in a neighborhood of 0.  Then~(\ref{eq:ppdfa-derivative}) entails that $\ppdfa$ is itself twice differentiable in a neighborhood of 1.  Writing $ \ppdfa^{(1)}(t) = a(t) b(t)$, with 
  \begin{displaymath}
    \begin{cases}
      a(t) & = 1/(4\tpdfn(q_0(t/2))) \\
      b(t) & = - \left(\ilr\right)' (q_0(t/2)) + \left(\ilr\right)' (-q_0(t/2) )
   \end{cases}\,,
  \end{displaymath}
we have $\ppdfa^{(2)}(t) =a'(t)b(t) + a(t)b'(t)$. As $q_0(1/2) = \tcdfn^{-1}(1/2)=0$ , we have $b(1)=0$, so that $\ppdfa^{(2)}(1) = a(1)b'(1)$, where  $a(1)=1/(4\tpdfn(0))$ and
\begin{displaymath}
      b'(t) = \frac{1}{2\tpdfn(q_0(t/2))} \left(\left(\lr\right)^{(2)} (q_0(t/2)) + \left(\lr\right) ^{(2)} (-q_0(t/2) )\right)\,.
  \end{displaymath}
Thus $b'(1)=1/(2\tpdfn(0)) \times 2(\ilr)^{(2)}(0)$, which concludes the proof.
  \end{enumerate}
\end{proof}

\addcontentsline{toc}{section}{References}
\bibliography{critic}

\end{document}